\newtheorem{theorem}{Theorem}[section]
\theoremstyle{plain}
\newtheorem{corollary}[theorem]{Corollary}
\newtheorem{lemma}[theorem]{Lemma}
\newtheorem{proposition}[theorem]{Proposition}
\numberwithin{equation}{section}
\def\spec#1{\mathfrak{S}(#1)}
\def\per#1{\pi(#1)}
\def\peron#1{N(#1)}
\def\Per#1{\Pi(#1)}
\def\nxt{\rightarrow}
\def\nxo#1{\overset{#1}\nxt}
\def\nxm#1#2{\overset{#1.#2}\nxt}
\title[Missing lengths of rainbow cycles]{Periods in missing lengths of rainbow cycles}
\author{Petr Vojt\v{e}chovsk\'y}
\email{petr@math.du.edu}
\address{Department of Mathematics, University of Denver, 2360 S Gaylord St,
Denver, Colorado 80208, U.S.A.}
\keywords{rainbow cycle, length of rainbow cycle, complete graph,
edge-colored graph, arithmetic progression, Gallai graph}
\subjclass[2000]{Primary: 05C15; Secondary: 05C38}
\begin{document}

\begin{abstract}
A cycle in an edge-colored graph is said to be rainbow if no two of its edges
have the same color. For a complete, infinite, edge-colored graph $G$, define
\begin{displaymath}
    \spec{G}=\{n\ge 2\;|\;\text{no $n$-cycle of $G$ is rainbow}\}.
\end{displaymath}
Then $\spec{G}$ is a monoid with respect to the operation $n\circ m = n+m-2$,
and thus there is a least positive integer $\per{G}$, the period of $\spec{G}$,
such that $\spec{G}$ contains the arithmetic progression $\{N+k\per{G}\;|\;k\ge
0\}$ for some sufficiently large $N$.

Given that $n\in\spec{G}$, what can be said about $\per{G}$? Alexeev showed
that $\per{G}=1$ when $n\ge 3$ is odd, and conjectured that $\per{G}$ always
divides $4$. We prove Alexeev's conjecture:

Let $p(n)=1$ when $n$ is odd, $p(n)=2$ when $n$ is divisible by four, and
$p(n)=4$ otherwise. If $2<n\in\spec{G}$ then $\per{G}$ is a divisor of $p(n)$.
Moreover, $\spec{G}$ contains the arithmetic progression $\{N+kp(n)\;|\;k\ge
0\}$ for some $N=O(n^2)$. The key observations are: If $2<n=2k\in\spec{G}$ then
$3n-8\in\spec{G}$. If $16\ne n=4k\in\spec{G}$ then $3n-10\in\spec{G}$.

The main result cannot be improved since for every $k>0$ there are $G$, $H$
such that $4k\in\spec{G}$, $\per{G}=2$, and $4k+2\in\spec{H}$, $\per{H}=4$.
\end{abstract}

\maketitle

%%% INTRODUCTION

\section{Introduction}
Let $G$ be a complete, infinite, edge-colored graph. In \cite{BPV}, the
\emph{spectrum} of $G$ was defined as
\begin{displaymath}
    \spec{G}=\{n\ge 2\;|\;\text{no $n$-cycle of $G$ is rainbow}\}.
\end{displaymath}
It is easy to see (cf. \cite[Proposition 3.1]{BPV}) that $\spec{G}$ is a monoid
with respect to the operation $n\circ m = n+m-2$, and thus that $\spec{G}-2$
can be regarded as a submonoid of $(\mathbb N_0,+)$.

It is therefore reasonable to ask if a given submonoid of $(\mathbb N_0,+)$ can
be realized as $\spec{G}-2$ for some $G$. If the submonoid contains $2$, the
answer is ``yes,'' by \cite[Propositions 3.2, 3.3]{BPV}. In particular, there
is $G$ with $\spec{G}=\{2$, $4$, $6$, $8$, $\dots\}$. However, Alexeev \cite{A}
noticed that not every submonoid of $(\mathbb N_0,+)$ can be so realized,
making the situation much more interesting.

Consequently, Alexeev became interested in the related question \emph{``What
can be said about $\spec{G}$, provided $n\in\spec{G}$?}'', and proved:
\begin{enumerate}
\item[(i)] There is $G$ with $\spec{G} = \{2$, $6$, $10$, $14$, $\dots\}$.

\item[(ii)] If $n=2k+1\in\spec{G}$ then $k(2k+1)\in\spec{G}$.

\item[(iii)] If $n=2k+1\in\spec{G}$ then $3n-6\in\spec{G}$.

\item[(iv)] If $n=2k+1\in\spec{G}$ then $m\in\spec{G}$ for every $m\ge
2n^2-13n+23$.
\end{enumerate}
Moreover, he gave a table of computer generated results, among which one can
find: $8\in \spec{G}\Rightarrow 16\in\spec{G}$, $10\in\spec{G}\Rightarrow
22\in\spec{G}$, and $12\in\spec{G}\Rightarrow 26\in\spec{G}$.

All results mentioned up to this point will be used below without reference.

Let us now recall a simple fact about subsemigroups of natural numbers. Given a
subsemigroup $A$ of $(\mathbb N_0,+)$, let $\Per{A} = \{m-n\;|\;m$, $n\in A$,
$n<m\}$, and $\per{A} = \min \Per{A}$.

\begin{proposition}\label{Pr:MyFrob}
Let $A\ne\{0\}$ be a subsemigroup of $(\mathbb N_0,+)$, and let $p\in\Per{A}$.
Then there is $N$ such that $A$ contains the arithmetic progression
$\{N+kp\;|\;k\ge 0\}$. Moreover, $\per{A} = \gcd \Per{A}$.
\end{proposition}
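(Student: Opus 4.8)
The plan is to prove the two assertions separately: first the existence of the arithmetic progression with an arbitrary prescribed difference $p\in\Per{A}$, and then the identity $\per{A}=\gcd\Per{A}$. Throughout, write $p=m-n$ with $m$, $n\in A$ and $n<m$; since $A\ne\{0\}$ and $A$ is closed under addition, I may assume $n\ge 1$ (if the only available representation had $n=0$, then $p=m\in A$, and I would replace the pair $(0,m)$ by $(m,2m)$, which still has difference $p$ but a positive smaller term).

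For the first assertion, the key idea is to work inside the sub-semigroup $\langle m,n\rangle=\{am+bn\;|\;a,b\ge 0,\ a+b\ge 1\}\subseteq A$ generated by the two elements $m$, $n$. Set $g=\gcd(m,n)$. Then $m/g$ and $n/g$ are coprime, so by the classical Chicken McNugget (Sylvester--Frobenius) theorem the semigroup $\langle m/g,n/g\rangle$ contains every integer exceeding $(m/g)(n/g)$; multiplying through by $g$, there is a bound $B$ such that every multiple of $g$ greater than $B$ lies in $\langle m,n\rangle$. Now observe that $g$ divides $p=m-n$. Hence, choosing any multiple $N$ of $g$ with $N>B$, each term $N+kp$ is again a multiple of $g$ and satisfies $N+kp\ge N>B$, so $N+kp\in\langle m,n\rangle\subseteq A$ for all $k\ge 0$, which is exactly the desired arithmetic progression.

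For the second assertion, put $d=\gcd\Per{A}$. Since $d$ divides every element of $\Per{A}$, it divides $\per{A}=\min\Per{A}$, and therefore $d\le\per{A}$. The reverse inequality will follow once I show that $d$ is itself a realizable difference, i.e.\ that $d\in\Per{A}$; for then $\per{A}=\min\Per{A}\le d$. To see this, I would choose finitely many differences $m_i-n_i\in\Per{A}$ whose gcd is already $d$ (possible since the gcd of a set of integers is attained on a finite subset), and apply B\'ezout to obtain integers $a_i$ with $\sum_i a_i(m_i-n_i)=d$. Separating the positive and negative coefficients, set $X=\sum_{a_i>0}a_i m_i+\sum_{a_i<0}|a_i|\,n_i$ and $Y=\sum_{a_i>0}a_i n_i+\sum_{a_i<0}|a_i|\,m_i$. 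Both are nonempty sums of elements of $A$ (the positive-coefficient part is nonempty because $d>0$), hence $X$, $Y\in A$, and a direct cancellation gives $X-Y=\sum_i a_i(m_i-n_i)=d>0$. Thus $d\in\Per{A}$, and combining with $d\le\per{A}$ yields $\per{A}=d=\gcd\Per{A}$.

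The only genuinely substantive input is the Frobenius fact used in the second paragraph; everything else is bookkeeping. The main obstacle I anticipate is purely cosmetic: arranging the B\'ezout combination so that both $X$ and $Y$ are manifestly nonempty sums of semigroup elements, and disposing of the degenerate case $n=0$ at the outset so that $\gcd(m,n)$ and the two coprime generators behave as expected.
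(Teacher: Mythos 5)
Your proof is correct, but it takes a genuinely different route from the paper's on both halves. For the arithmetic progression, the paper argues bare-handed: writing $p=m-n$, it observes that for any $s\ge n$ the numbers $(s-j)n+jm=sn+jp$, $j=0,\dots,n$, all lie in $A$, and then chains these blocks together with $s=n$, $s=n+p$, $s=n+2p,\dots$ to get $\{n^2+kp\;|\;k\ge 0\}\subseteq A$. This is self-contained and, importantly, quantitative: the explicit bound $N\le n^2$ is cited immediately after the proposition (to get $\peron{A}\le n^2$ and the $O(n^2)$ statements later), whereas your argument imports the Sylvester--Frobenius theorem as a black box (legitimate and not circular, since that theorem is classical and the paper itself quotes it separately, but it yields a less clean bound of roughly $mn/\gcd(m,n)$ and needs your side remark disposing of $n=0$). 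For the identity $\per{A}=\gcd\Per{A}$, the paper deduces it from the first half: it compares the progression of difference $p$ with the progression of difference $\per{A}$ and extracts, when $\per{A}$ fails to divide $p$, a positive difference smaller than $\per{A}$, contradicting minimality. You instead prove directly, via B\'ezout and the positive/negative coefficient split, that $d=\gcd\Per{A}$ is itself realized as a difference $X-Y$ of two elements of $A$; your handling of the only delicate point (both $X$ and $Y$ are nonempty sums because $d>0$ forces some $a_i>0$) is correct, and this gives the slightly stronger conclusion $d\in\Per{A}$ without invoking part one at all. In short: the paper's proof buys self-containedness and an explicit constant it reuses; yours buys modularity and a more direct, standard number-theoretic argument for the gcd claim.
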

\begin{proof}
Let $p\in\Per{A}$, and fix $m$, $n\in A$ such that $p=m-n$. Pick $s\ge n$. Then
$A$ contains $sn$, $(s-1)n+m = sn+p$, $\dots$, $(s-n)n + nm = sn + np$. Using
$s=n$, we see that $A$ contains $n^2$, $n^2+p$, $\dots$, $n^2+np$. Using
$s=n+p$, we see that $A$ contains $(n+p)n=n^2+np$, $(n+p)n+p = n^2+(n+1)p$,
$\dots$, $(n+p)n + np = n^2+2np$, and so on. Thus $A$ contains the arithmetic
progression $\{n^2+kp\,|\,k\ge 0\}$.

In particular, there are $N_0$, $N_1$ such that $\{N_0+kp;k\ge 0\}\subseteq A$,
$\{N_1+k\per{A};\;k\ge 0\}\subseteq A$. If $p$ does not divide $\per{A}$, there
is $r\in\Per{A}$ such that $r<\per{A}$, a contradiction.
\end{proof}

Let $\peron{A}$ be the least positive integer such that
\begin{displaymath}
    \{\peron{A}+k\per{A}\;|\;k\ge 0\} \subseteq A.
\end{displaymath}
The proof of Proposition \ref{Pr:MyFrob} shows that $\peron{A}\le n^2$ whenever
there are $m$, $n\in A$ such that $m-n=\per{A}$. Here is another way of
estimating $\peron{A}$:

\begin{theorem}[Frobenius coin-exchange problem] Let $A$ be a subsemigroup of
$(\mathbb N_0,+)$ containing relatively prime integers $n$, $m$. Then
$\per{A}=1$ and $\peron{A}\le (n-1)(m-1)$.
\end{theorem}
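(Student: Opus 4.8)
The plan is to treat the two assertions separately, leaning on the elementary structure already developed for subsemigroups of $(\mathbb{N}_0,+)$. Since $A$ is a subsemigroup containing $n$ and $m$, it contains every combination $an+bm$ with $a,b\ge 0$ and $a+b\ge 1$. For the first assertion, $\per{A}=1$, I would invoke Bézout's identity: because $\gcd(n,m)=1$ there are integers with $xn+ym=1$, and by adding a suitable multiple of $(m,-n)$ to $(x,y)$ one may assume $x>0$ and $y<0$. Writing $y=-y'$ gives $xn-y'm=1$ with $x,y'>0$, so $xn$ and $y'm$ both lie in $A$ and differ by $1$. Hence $1\in\Per{A}$, and since $\per{A}=\min\Per{A}$ is a positive integer, $\per{A}=1$.

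Because $\per{A}=1$, the inequality $\peron{A}\le(n-1)(m-1)$ reduces to showing that every integer $t\ge(n-1)(m-1)$ already belongs to $A$. As all nonnegative combinations $an+bm$ lie in $A$, it suffices to prove the classical representability fact: every integer $t\ge(n-1)(m-1)$ can be written as $an+bm$ with $a,b\ge 0$. This is the one substantive ingredient, and I would prove it directly rather than cite it, since the theorem we are establishing is itself the Frobenius statement.

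To prove representability I would argue by residues modulo $m$. Fix $t\ge(n-1)(m-1)$ and choose the unique $a\in\{0,1,\dots,m-1\}$ with $an\equiv t\pmod m$, which exists because $\gcd(n,m)=1$ makes $n$ invertible modulo $m$. Then $t-an$ is divisible by $m$, and the constraint $a\le m-1$ yields
\begin{displaymath}
    t-an\ge(n-1)(m-1)-(m-1)n=(m-1)\bigl((n-1)-n\bigr)=-(m-1)>-m.
\end{displaymath}
A multiple of $m$ that exceeds $-m$ must be nonnegative, so $b:=(t-an)/m\ge 0$ and $t=an+bm$, as required. Combining the two parts, $\{(n-1)(m-1)+k\mid k\ge 0\}\subseteq A$ with $\per{A}=1$, giving $\peron{A}\le(n-1)(m-1)$.

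The only delicate point is the arithmetic in the final step: one must take the residue representative $a$ in exactly the range $\{0,\dots,m-1\}$ so that the estimate $t-an>-m$ is sharp enough to force $b\ge 0$. This is precisely what pins the threshold at $(n-1)(m-1)$ rather than at a larger value, and I expect no deeper obstacle beyond getting this bound to come out tight.
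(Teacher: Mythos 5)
Your proof is correct, but there is nothing in the paper to compare it against: the paper states this theorem without proof, quoting it as the classical Frobenius coin-exchange result, and only proves the consequence it actually needs (Corollary \ref{Cr:Frob}); the nearest thing to a proof in the paper is Proposition \ref{Pr:MyFrob}, whose argument yields only the weaker quadratic estimate $\peron{A}\le n^2$. What you supply is the standard textbook argument, and it is sound: B\'ezout's identity rearranged as $xn-y'm=1$ with $x,y'>0$ puts $1\in\Per{A}$, forcing $\per{A}=1$; and the residue computation modulo $m$ (choosing $a\in\{0,\dots,m-1\}$ with $an\equiv t\pmod{m}$ and checking $t-an>-m$) shows every $t\ge(n-1)(m-1)$ is a nonnegative combination $an+bm$, hence lies in $A$ once $t\ge 1$. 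The arithmetic pinning the threshold at $(n-1)(m-1)$ is exactly right. Two small remarks. First, to conclude $t\in A$ you need the representation to have $a+b\ge 1$, since a subsemigroup of $(\mathbb{N}_0,+)$ need not contain $0$; you noted this at the outset, and it is automatic for $t\ge 1$. Second, in the degenerate case $n=1$ or $m=1$ the bound $(n-1)(m-1)=0$ cannot literally dominate $\peron{A}$, which is by definition a positive integer; this is a defect of the statement itself rather than of your proof, and it is irrelevant to the paper's application, where both generators are at least $2$.
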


\begin{corollary}\label{Cr:Frob}
Let $A$ be a subsemigroup of $(\mathbb N_0,+)$ containing $pn$, $pm$, where
$n$, $m$ are relatively prime. Then $\per{A}$ is a divisor of $p$, and there is
$N\le p(n-1)(m-1)$ such that $\{N+kp\;|\;k\ge 0\}\subseteq A$.
\end{corollary}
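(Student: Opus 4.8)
The plan is to deduce this from the Frobenius coin-exchange theorem by dividing out the common factor $p$ and then scaling back. First I would consider the subsemigroup $B=\langle n,m\rangle$ of $(\mathbb N_0,+)$ generated by the two coprime integers. Since $\gcd(n,m)=1$, the Frobenius theorem applies to $B$ and yields $\per{B}=1$ and $\peron{B}\le (n-1)(m-1)$; equivalently, $B$ contains every integer $j\ge (n-1)(m-1)$.

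Next I would transfer this back to $A$. Every element of $B$ has the form $an+bm$ with $a,b\ge 0$, and $p(an+bm)=a(pn)+b(pm)$ lies in $A$ because $A$ is a subsemigroup containing $pn$ and $pm$. Hence $pB\subseteq A$. As $B$ contains all integers $\ge (n-1)(m-1)$, the set $pB$ contains every multiple $pj$ with $j\ge (n-1)(m-1)$. Putting $N=p(n-1)(m-1)$, we get
\begin{displaymath}
    \{N+kp\;|\;k\ge 0\}=\{p((n-1)(m-1)+k)\;|\;k\ge 0\}\subseteq pB\subseteq A,
\end{displaymath}
which is the desired progression, with the stated bound $N\le p(n-1)(m-1)$ holding by construction.

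It remains to handle the period. Since $N$ and $N+p$ both lie in $A$, we have $p=(N+p)-N\in\Per{A}$. Invoking the final assertion of Proposition \ref{Pr:MyFrob}, namely $\per{A}=\gcd\Per{A}$, together with $p\in\Per{A}$, we conclude that $\per{A}$ divides $p$. I do not expect a genuine obstacle, as the statement is essentially a rescaled instance of Frobenius; the only point requiring care is to claim divisibility rather than equality for the period, since $A$ may contain elements unrelated to $pn$ and $pm$ that produce gaps properly dividing $p$. Reading off $\per{A}=\gcd\Per{A}$ rather than naively asserting $\per{A}=p$ is exactly what keeps the conclusion correct.
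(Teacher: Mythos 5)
Your proposal is correct and follows essentially the same route as the paper: reduce to the Frobenius coin-exchange theorem by dividing out $p$, observe that $p$ times the resulting semigroup sits inside $A$, and read off the progression and the divisibility of $\per{A}$ from $p\in\Per{A}$ via Proposition \ref{Pr:MyFrob}. The only cosmetic difference is that you take $B=\langle n,m\rangle$ generated by the two coprime integers, whereas the paper takes $B=\{x/p\;|\;x\in A,\ p\mid x\}$; both contain $n$ and $m$, so the argument is the same.
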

\begin{proof}
Let $B=\{x/p\;|\; x\in A$ is divisible by $p\}$. Then $B$ is a subsemigroup of
$(\mathbb N_0,+)$, $m\in B$, $n\in B$. By the Frobenius coin-exchange problem,
$\per{B}=1$ and $\peron{B}\le (n-1)(m-1)$. Thus $\per{pB}=p$ and $\peron{pB}\le
p(n-1)(m-1)$. Since $pB$ is a subsemigroup of $A$, we are done.
\end{proof}

Using this terminology, (iv) can be restated roughly as follows: If $\spec{G}$
contains an odd integer $n$ then $\per{\spec{G}} = 1$, and $\peron{\spec{G}} =
O(n^2)$.

Alexeev conjectured in \cite{A} that $\per{\spec{G}}$ is a divisor of $4$ for
every $G$. This would mean that the two constructions yielding $\spec{G}=\{2$,
$4$, $6$, $8$, $\dots\}$ and $\spec{G}=\{2$, $6$, $10$, $14$, $\dots\}$ are
exceptional. We establish his conjecture and more, as described in the
abstract. The asymptotic behavior of $\spec{G}$ is therefore fully understood.

Finally, when $3\in\spec{G}$, let us call $G$ a \emph{Gallai graph}. Note that
Gallai graphs have no rainbow cycles. All finite Gallai graphs can be built
iteratively \cite{Gallai}, and the iterative construction is very useful in
proving results about Gallai graphs. Is there a similar iterative construction
for $G$ with $4\in\spec{G}$, $5\in\spec{G}$, etc? The results obtained here
could shed some light into this question.

\section{The notation and technique}

The edge with vertices $i$, $j$ will be denoted by $(i,j)$, and its color by
$\gamma(i,j)$.

Given $n\in\spec{G}$, how can one go about proving that $m\in\spec{G}$ for some
specific $m>n$? In a typical scenario, we start with a complete graph $G$ on
$m$ vertices $0$, $\dots$, $m-1$ drawn in the usual way (the vertices form a
regular $m$-gon) and color the edges on the perimeter cycle by
$\gamma(i,i+1)=i$, with vertices and edge colors labeled modulo $m$. There is
no a priori restriction on the possible colors of the inner edges of $G$, but
by carefully selecting $n$-cycles in $G$, we might manage to restrict colors on
the inner edges, until, ultimately, we might prove that some inner edge cannot
be colored at all, hence reaching a contradiction. For instance, since the
$n$-cycle $0\nxt 1\nxt 2\nxt\cdots \nxt n-1 \nxt 0$ cannot be rainbow when
$n\in\spec{G}$, we see that $\gamma(0,n-1) \in\{0$, $\dots$, $n-2\}\subset\{0$,
$\dots$, $m-1\}$.

The difficult part in this strategy is the selection of ``good'' $n$-cycles
that lead to a systematic restriction of colors on the inner edges of $G$. Once
a suitable $n$-cycle is found, the argument becomes routine. In this sense, the
drawings of $n$-cycles accompanying our proofs say (almost) everything. Some
$n$-cycles will be used more than once, and that is the reason why we have
labeled their vertices only by letters---the meaning of the letters will be
clarified in every instance the cycle is used.

Most of our results are of the form ``if $n\in\spec{G}$ and $n>c$ then
$\dots$''. It is usually not difficult to verify the conclusion for large
values of $n$, but it takes some effort to pin down the constant $c$. This
suggests a strategy in which a proof is first read with a large enough $n$ in
mind, and once the structure of the proof is understood, the constant $c$ can
be carefully estimated during second pass. In every such proof we point out at
least one step where $n>c$ is needed.

%%% EVEN CASE

\section{The even case.}

Fix $n\ge 4$. In Lemmas \ref{Lm:n-1}, \ref{Lm:n-5}, \ref{Lm:7}, let $G$ be a
complete, edge-colored graph with $n\in\spec{G}$, containing a rainbow $3n-8$
cycle $0\nxt 1\nxt \cdots\nxt 3n-9\nxt 0$ such that $\gamma(i,i+1)=i$ for every
$i$.

%FIGURE
\setlength{\unitlength}{0.8mm}
\begin{figure}[th]
\begin{center}
\input{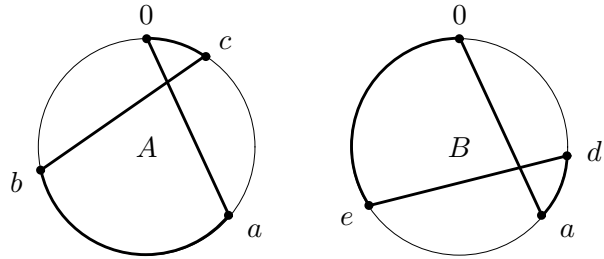}
\end{center}
\caption{Proof of Lemmas \ref{Lm:n-1} and \ref{Lm:n-1x}.} \label{Fg:n-1}
\end{figure}

\begin{lemma}\label{Lm:n-1}
Let $n > 4$. Then we can assume without loss of generality that
\begin{displaymath}
    \gamma(i,i+n-1) \in \{i,i+1\}
\end{displaymath}
for every $i$.
\end{lemma}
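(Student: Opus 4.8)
The plan is to track, for each $i$, the offset $f(i)$ defined by $\gamma(i,i+n-1)=i+f(i)$, and to show — after one reflection — that $f(i)\in\{0,1\}$ for every $i$. First I would use the single $n$-cycle $i\nxt i+1\nxt\cdots\nxt i+n-1\nxt i$, which closes along the chord $(i,i+n-1)$. Since $n\in\spec{G}$ it is not rainbow, and as the $3n-8$ perimeter colors are distinct, the repeated color must sit on the chord, forcing $f(i)\in\{0,\dots,n-2\}$ for every $i$.

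Next I would feed this into the two-chord $n$-cycle drawn in Figure~\ref{Fg:n-1}(A), namely
\[
    i\nxt (i+n-1)\nxt (i+n-2)\nxt (i+n-3)\nxt (i+2n-4)\nxt\cdots\nxt i,
\]
built from the two chords $(i,i+n-1)$ and $(i+n-3,i+2n-4)$, the two perimeter steps of colors $i+n-2,\,i+n-3$, and the $n-4$ perimeter steps closing back to $i$. This is exactly where $n>4$ enters: at $n=4$ the final arc is empty and the configuration degenerates. Writing $\beta=\gamma(i+n-3,i+2n-4)=(i+n-3)+f(i+n-3)$ and analysing which colors can coincide — all perimeter colors are distinct, so any repeat involves one of the two chords — I expect to extract the key dichotomy
\[
    f(i)\in\{n-3,n-2\}\quad\text{or}\quad f(i+n-3)\in\{0,1\}.\qquad(\star)
\]
I expect the real work to live here: the forced coincidence could be between the two chords themselves ($\gamma(i,i+n-1)=\beta$) rather than between a chord and a perimeter edge, so I would check that this case still lands inside $(\star)$ (it forces $f(i)=n-3+f(i+n-3)$, hence high and low simultaneously), and I would keep strict track of indices modulo $3n-8$ so that the wrap-around arc cannot accidentally share a color with either chord.

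With $(\star)$ in hand the endgame is arithmetic. Because $n>4$, the ``low'' set $\{0,1\}$ and the ``high'' set $\{n-3,n-2\}$ are disjoint, so $(\star)$ reads: if $f(i)$ is not high then $f(i+n-3)$ is low, and (the shifted contrapositive) if $f(i)$ is not low then $f(i-(n-3))$ is high. Since $3n-8=3(n-3)+1$ gives $\gcd(n-3,3n-8)=1$, stepping by $n-3$ visits every residue. A ``middle'' value of $f$ is neither low nor high, so it would trigger both implications and propagate all-low forward and all-high backward around the whole cycle, a contradiction; hence no offset is middle, and the same propagation then forces the global dichotomy that either $f(i)\in\{0,1\}$ for all $i$ or $f(i)\in\{n-3,n-2\}$ for all $i$.

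Finally I would invoke the reflection $v\mapsto c-v$ of the perimeter cycle, together with the induced recoloring: it carries a chord of this family to another such chord and replaces each offset $f$ by $n-2-f$, interchanging the low and high sets. Applying it in the all-high case lets me assume without loss of generality that $f(i)\in\{0,1\}$, i.e. $\gamma(i,i+n-1)\in\{i,i+1\}$, for every $i$. The propagation and the reflection are routine; the main obstacle, as noted, is the careful color bookkeeping that pins down $(\star)$, especially the chord–chord coincidence and the wrap-around indices.
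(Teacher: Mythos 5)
Your proposal is correct and takes essentially the same approach as the paper: your two-chord cycle at $i$ is exactly the paper's cycle $A$/$B$ of Figure \ref{Fg:n-1} up to rotation, your dichotomy $(\star)$ is the paper's conclusion from those cycles (including the chord--chord coincidence case), and the endgame rests on the same two ingredients, $\gcd(n-3,3n-8)=1$ propagation and the reflection symmetry. The only cosmetic difference is ordering: the paper applies the reflection at the outset to fix $\gamma(0,n-1)\in\{0,1\}$ without loss of generality and then propagates forward, whereas you propagate in both directions to the global all-low/all-high dichotomy and invoke the reflection once at the end.
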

\begin{proof}
Consider the $n$-cycle $A$ of Figure \ref{Fg:n-1}, where $a=n-1$, $b=2n-5$,
$c=2$. The edges of $A$ have colors $\gamma(0,n-1)$, $n-1$, $\dots$, $2n-6$,
$\gamma(2,2n-5)$, $1$, $0$. Since $0\nxt 1\nxt \cdots \nxt n-1 \nxt 0$ is an
$n$-cycle and $n\in\spec{G}$, we have $\gamma(0,n-1)\in\{0,\dots,n-2\}$.
Similarly, $\gamma(2,2n-5)\in\{2n-5,\dots,3n-9,0,1\}$. Since $A$ is not
rainbow, at least one of the colors $\gamma(0,n-1)$, $\gamma(2,2n-5)$ occurs
twice on $A$. Thus either $\gamma(0,n-1)\in\{0,1\}$ or
$\gamma(2,2n-5)\in\{0,1\}$. The cycle $A$ is symmetrical with respect to the
line passing through the center and the vertex $1$. The edges $(0,n-1)$ and
$(2,2n-5)$ therefore play a symmetrical role in the construction. Hence,
without loss of generality, $\gamma(0,n-1)\in\{0,1\}$.

Consider the $n$-cycle $B$ of Figure \ref{Fg:n-1}, where $d=n-3$, $e=2n-4$. As
above, we reach the conclusion that either $\gamma(n-3,2n-4)\in\{n-3,n-2\}$, or
$\gamma(0,n-1)\in\{n-3,n-2\}$. But $\gamma(0,n-1)\in\{0,1\}$ and $\{0,1\}\cap
\{n-3,n-2\}=\emptyset$ (we need $n>4$ here). Thus
$\gamma(n-3,2n-4)\in\{n-3,n-2\}$.

Just as we have rotated $A$ clockwise by $n-3$ steps to obtain $B$, we can
rotate $B$ clockwise by $n-3$ steps, etc. Since the integers $3n-8$ and $n-3$
are relatively prime, it follows that $\gamma(i,i+n-1)\in\{i,i+1\}$ for every
$i$.
\end{proof}

%FIGURE
\setlength{\unitlength}{0.8mm}
\begin{figure}[th]
\begin{center}
\input{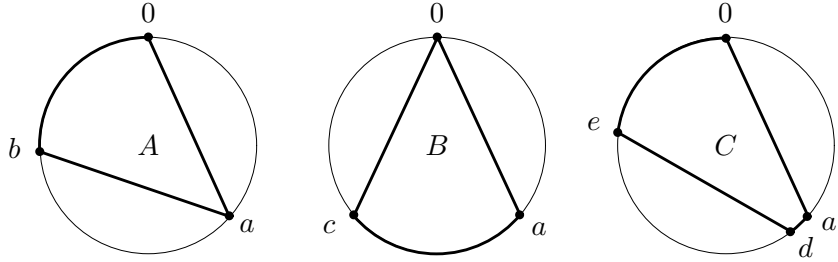}
\end{center}
\caption{Proof of Lemmas \ref{Lm:n-5} and \ref{Lm:n-5x}.} \label{Fg:n-5}
\end{figure}

\begin{lemma}\label{Lm:n-5}
Let $n>5$. Then we can assume without loss of generality that
\begin{displaymath}
    \gamma(i,i+n-1)\in\{i,i+1\},\quad \gamma(i,i+n-5)\in\{i+n-5,i+n-4\}
\end{displaymath}
for every $i$.
\end{lemma}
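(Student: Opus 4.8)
The plan is to keep the normalization already achieved in Lemma \ref{Lm:n-1}---so that $\gamma(i,i+n-1)\in\{i,i+1\}$ for every $i$---and to pin down the chords of length $n-5$ by running the same ``one unknown chord, many controlled edges'' argument three times. All three cycles $A$, $B$, $C$ of Figure \ref{Fg:n-5} contain the single chord $(0,n-5)$ whose colour we wish to restrict (so $a=n-5$ throughout), and each of them closes into an $n$-cycle using exactly one chord of length $n-1$---whose colour is already confined to a $2$-element set by Lemma \ref{Lm:n-1}---together with a run of perimeter edges whose colours are determined. It suffices to do this at the base point $0$ and then rotate.

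Concretely, I would take $A$ to be $0\nxt n-5\nxt 2n-6\nxt 2n-5\nxt\cdots\nxt 0$, closing with the chord $(n-5,2n-6)$; take $B$ to be $0\nxt n-5\nxt n-4\nxt\cdots\nxt 2n-7\nxt 0$, closing with the chord $(2n-7,0)$; and take $C$ to be $0\nxt n-5\nxt n-4\nxt 2n-5\nxt\cdots\nxt 0$, closing with the chord $(n-4,2n-5)$. In each case one checks that, apart from $\gamma(0,n-5)$, all edge colours are pairwise distinct: the perimeter colours form an increasing block, and the two colours permitted for the length-$(n-1)$ chord by Lemma \ref{Lm:n-1} lie strictly below that block. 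Hence non-rainbowness forces $\gamma(0,n-5)$ to repeat one of the known colours, so that
\begin{displaymath}
\gamma(0,n-5)\in S_A\cap S_B\cap S_C,
\end{displaymath}
where $S_A=\{n-5,n-4\}\cup\{2n-6,\dots,3n-9\}$, $S_B=\{n-5,\dots,2n-6\}$ and $S_C=\{n-5,n-4,n-3\}\cup\{2n-5,\dots,3n-9\}$. A short computation gives $S_A\cap S_B=\{n-5,n-4,2n-6\}$, and then $2n-6\notin S_C$, so the triple intersection is exactly $\{n-5,n-4\}$, as desired.

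Nothing in this construction is special to the base point: every cycle is just a shift of the perimeter colouring, and the auxiliary length-$(n-1)$ chords are controlled by Lemma \ref{Lm:n-1}, which holds for all indices. I would therefore rotate the three cycles to start at an arbitrary vertex $i$ and read off $\gamma(i,i+n-5)\in\{i+n-5,i+n-4\}$ for every $i$, exactly as the final rotation closes the proof of Lemma \ref{Lm:n-1}. Together with the inherited statement about the length-$(n-1)$ chords this is precisely the assertion of the lemma.

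The main obstacle is the design of the three cycles rather than any single estimate: one must place the length-$(n-1)$ chord in three different positions so that the forced colour sets overlap in exactly the pair $\{n-5,n-4\}$ and in nothing else. A naive choice of two cycles leaves a stray colour such as $2n-6$ in the intersection, which is the whole reason a third cycle is required. The hypothesis $n>5$ enters only when checking that these diagrams really are $n$-cycles on distinct vertices; it is precisely the condition $n-5\ge 1$ guaranteeing that the chord $(0,n-5)$ has distinct endpoints (equivalently, that the short perimeter step $n-5\nxt n-4$ used in cycle $C$ is nonempty), without which the constructions degenerate.
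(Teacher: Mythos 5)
Your proposal is correct and is essentially identical to the paper's own proof: the three cycles you construct are exactly the cycles $A$, $B$, $C$ of Figure \ref{Fg:n-5} (with $a=n-5$, $b=2n-6$, $c=2n-7$, $d=n-4$, $e=2n-5$), the color sets $S_A$, $S_B$, $S_C$ and their intersection $\{n-5,n-4\}$ match the paper's computation, and the final rotation to arbitrary $i$ is the same. The only nitpick is cosmetic: in cycle $B$ the two colors allowed for the chord $(2n-7,0)$ lie \emph{above} the perimeter block $\{n-5,\dots,2n-8\}$, not below it, but disjointness (which is all you use) still holds, and your stated $S_B$ is correct.
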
\
\begin{proof}
By Lemma \ref{Lm:n-1}, we can assume that $\gamma(i,i+n-1)\in\{i,i+1\}$ for
every $i$. Consider the $n$-cycle $A$ of Figure \ref{Fg:n-5}, where $a=n-5$ (we
need $n>5$ here), $b=2n-6$. Since $(2n-6) - (n-5) = n-1$, we conclude that
$\gamma(0,n-5)\in\{n-5,n-4\}\cup\{2n-6,\dots,3n-9\}$. Consider the $n$-cycle
$B$ of Figure \ref{Fg:n-5}, where $c=2n-7$. Since $(2n-7)+ (n-1) = 3n-8$, we
conclude that $\gamma(0,n-5)\in\{n-5,\dots,2n-8\}\cup\{2n-7,2n-6\}$. Finally,
consider the $n$-cycle $C$ of Figure \ref{Fg:n-5}, where $d=n-4$ and $e=2n-5$.
Since $(2n-5) - (n-4) = n-1$, we conclude that $\gamma(0,n-5)\in
\{n-5,n-4,n-3\}\cup\{2n-5,\dots,3n-9\}$. Altogether,
$\gamma(0,n-5)\in\{n-5,n-4\}$. As we could have started with any $i$, not
necessarily with $i=0$, we have $\gamma(i,i+n-5)\in\{i+n-5,i+n-4\}$ for every
$i$.
\end{proof}

%FIGURE
\setlength{\unitlength}{0.8mm}
\begin{figure}[th]
\begin{center}
\input{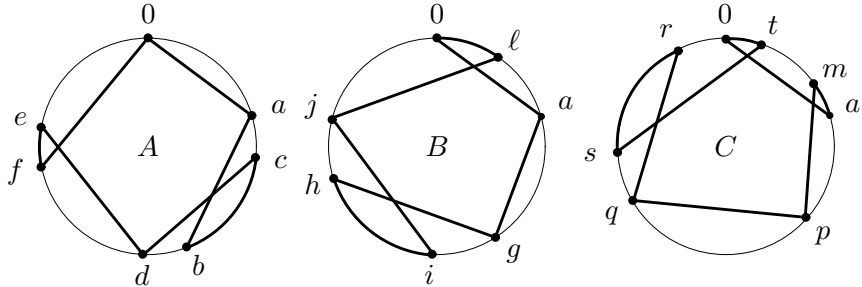}
\end{center}
\caption{Proof of Lemmas \ref{Lm:7} and \ref{Lm:7x}.} \label{Fg:7}
\end{figure}

\begin{lemma}\label{Lm:7}
Let $n>10$. Then we can assume without loss of generality that
\begin{displaymath}
    \gamma(i,i+n-1)\in\{i,i+1\},\,\gamma(i,i+n-5)\in\{i+n-5,i+n-4\},\,
    \gamma(i,i+7)\in\{i,i+1\}
\end{displaymath}
for every $i$.
\end{lemma}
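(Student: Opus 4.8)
The plan is to follow the same rotational bootstrapping strategy used in Lemmas~\ref{Lm:n-1} and~\ref{Lm:n-5}, now aiming to pin down $\gamma(i,i+7)$. By Lemma~\ref{Lm:n-5} I may already assume that $\gamma(i,i+n-1)\in\{i,i+1\}$ and $\gamma(i,i+n-5)\in\{i+n-5,i+n-4\}$ for every $i$; these two families of restrictions are the raw material I will feed into the new $n$-cycles. The goal is to show $\gamma(0,7)\in\{0,1\}$, and then rotate: since the chord $(0,7)$ has step $7$ and the perimeter has $3n-8$ vertices, the orbit of $0$ under adding $7$ modulo $3n-8$ will cover all residues precisely when $\gcd(7,3n-8)=1$. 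For a generic even $n$ this gcd is $1$, so a single chord-color determination propagates to all $i$; I will need to note when $7\mid 3n-8$ (equivalently $n\equiv 5\pmod 7$, which for even $n$ means $n\equiv 12\pmod{14}$) and argue that the conclusion still follows, perhaps by also rotating with the other available relatively prime steps.

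The core of the argument is to construct $n$-cycles $A$, $B$, $C$ (the three diagrams in Figure~\ref{Fg:7}) whose edge sets, apart from the target chord $(0,7)$, consist entirely of perimeter edges and of chords of type $(i,i+n-1)$ or $(i,i+n-5)$ whose colors I have already confined to two-element sets. For each such cycle I read off the multiset of colors appearing on its edges; since the cycle cannot be rainbow, some color must repeat, and because all the non-target edges have colors trapped in known small windows, the repetition forces $\gamma(0,7)$ into a union of short intervals. I would compute, for $A$, $B$, $C$ respectively, three such constraints of the form $\gamma(0,7)\in S_A$, $\gamma(0,7)\in S_B$, $\gamma(0,7)\in S_C$, designed so that $S_A\cap S_B\cap S_C=\{0,1\}$. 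This mirrors exactly the three-cycle intersection in Lemma~\ref{Lm:n-5}, where the windows $\{n-5,n-4\}\cup\{2n-6,\dots\}$, $\{n-5,\dots\}\cup\{2n-7,2n-6\}$, and $\{n-5,n-4,n-3\}\cup\{2n-5,\dots\}$ collapsed to $\{n-5,n-4\}$.

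The main obstacle I expect is bookkeeping the color windows precisely enough that the triple intersection really is $\{0,1\}$ and nothing larger, and in particular verifying that the arithmetic gaps between the chord lengths ($7$, $n-1$, $n-5$) line up so that each cycle closes up into a genuine $n$-cycle on the $(3n-8)$-vertex perimeter. The reason the hypothesis $n>10$ appears is almost certainly that for small $n$ the three windows overlap in spurious extra residues, or two of the chords coincide or cross in a degenerate way, so the intervals $\{i+n-5,\dots\}$ and $\{\dots,i+7\}$ are no longer disjoint from the intended survivors; as the paper suggests, I would first run the argument with $n$ large to see the clean structure, then track each interval endpoint to locate the exact threshold and confirm it is $10$. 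Finally, I would record which single step in the construction of $A$, $B$, or $C$ forces $n>10$, as the paper's convention requires.
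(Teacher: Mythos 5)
Your core strategy is indeed the paper's: inherit the constraints of Lemma \ref{Lm:n-5} (the paper remarks that only the $(n-5)$-chord constraint is actually used), run the target chord $(0,7)$ through three $n$-cycles $A$, $B$, $C$ whose remaining edges are perimeter edges and length-$(n-5)$ chords, and intersect the three resulting color windows to conclude $\gamma(0,7)\in\{0,1\}$. In the paper each of these cycles consists of $1$ forward edge of length $7$, $4$ forward edges of length $n-5$, and $n-5$ backward edges of length $1$, closing up because $7+4(n-5)-(n-5)=3n-8\equiv 0\pmod{3n-8}$; the hypothesis $n>10$ is used exactly once, to force two specific vertices of cycle $B$ (at positions $2n-3$ and $n+7$) into the correct cyclic order---so your ``degenerate crossing'' guess for the role of the threshold is the right one, not window overlap.

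The genuine gap is your propagation from $i=0$ to general $i$. The gcd/orbit argument you propose is both invalid and unnecessary, and it would strand you precisely in the cases $7\mid 3n-8$, i.e.\ $n\equiv 12\pmod{14}$---including $n=12$, which Theorem \ref{Th:Period} actually needs. It is invalid because orbit coverage is not a propagation mechanism: knowing $\gamma(0,7)\in\{0,1\}$ says nothing by itself about $\gamma(7,14)$. In Lemma \ref{Lm:n-1}, the relative primality of $n-3$ and $3n-8$ is used for a chain of forced implications---each rotated cycle contains one already-constrained chord and one new chord whose admissible color windows are disjoint, so the constraint transfers link by link---not for mere coverage of residues. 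And it is unnecessary here because the inputs to the construction, namely $\gamma(j,j+1)=j$ and $\gamma(j,j+n-5)\in\{j+n-5,j+n-4\}$, hold at \emph{every} vertex $j$, so the entire three-cycle argument is translation-invariant: repeating it with all indices shifted by $i$ yields $\gamma(i,i+7)\in\{i,i+1\}$ immediately. That is exactly how the paper concludes (``Starting at an arbitrary $i$ instead of at $i=0$\dots''), and it is the one-line replacement your propagation step needs.
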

\begin{proof}
By Lemma \ref{Lm:n-5}, we can assume that $\gamma(i,i+n-1)\in\{i,i+1\}$ and
$\gamma(i,i+n-5)\in\{i+n-5,i+n-4\}$ for every $i$ (in fact, we will only need
the second assumption).

Consider an $n$-cycle starting at $0$, containing $1$ forward (clockwise) edge
of length $7$, $4$ forward edges of length $n-5$, and $n-5$ backward
(counterclockwise) edges of length $1$. This is indeed an $n$-cycle, since $7 +
4(n-5) - (n-5) = 3n-8\equiv 0$. The three $n$-cycles of Figure \ref{Fg:7} will
be of this form.

Consider the $n$-cycle $A$ of Figure \ref{Fg:7} with $a=7$, $b=n+2$, $c=9$,
$d=n+4$, $e=2n-1$, $f=2n-3$, and deduce $\gamma(0,7)\in\{0,1\} \cup
\{9,\dots,n+5\} \cup \{2n-3,\dots,2n\}$.

Consider the $n$-cycle $B$ with $g=n+2$, $h=2n-3$, $i=n+7$ (we need $n>10$ to
have $h<i$), $j=2n+2$ and $\ell=5$, and deduce $\gamma(0,7)\in\{0,\dots,6\}
\cup \{n+2,n+3\} \cup \{n+7,\dots,2n-2\} \cup \{2n+2,2n+3\}$. Combined with the
restrictions from cycle $A$, we have $\gamma(0,7)\in\{0,1\}\cup \{n+2,n+3\}
\cup \{2n-3,2n-2\}$.

Finally, consider the $n$-cycle $C$ with $m=5$, $p=n$, $q=2n-5$, $r=3n-10$,
$s=2n$, $t=3$, and deduce $\gamma(0,7)\in \{0,\dots,6\} \cup \{n,n+1\} \cup
\{2n-5,2n-4\} \cup \{2n,\dots,3n-9\}$. Combined with the previous restrictions,
we obtain $\gamma(0,7)\in\{0,1\}$.

Starting at an arbitrary $i$ instead of at $i=0$, we see that
$\gamma(i,i+7)\in\{i,i+1\}$ for every $i$.
\end{proof}

%FIGURE
\setlength{\unitlength}{0.7mm}
\begin{figure}[th]
\begin{center}
\input{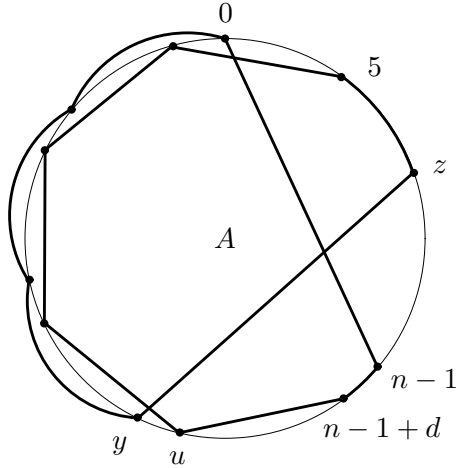}
\end{center}
\caption{Proof of Theorem \ref{Th:Period}.} \label{Fg:TwoRevolutions}
\end{figure}

\begin{theorem}\label{Th:Period}
Let $n\ge 4$ be an even integer. Assume that $G$ is a complete, edge-colored
graph containing no rainbow $n$-cycles. Then $G$ contains no rainbow
$(3n-8)$-cycles.
\end{theorem}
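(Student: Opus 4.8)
The plan is to argue by contradiction, using the three preceding lemmas to pin down the colors of the inner chords and then exhibiting a rainbow $n$-cycle. Suppose $G$ contained a rainbow $(3n-8)$-cycle. Since it is rainbow, I may rename its colors and rotate so that it is $0\nxt 1\nxt\cdots\nxt 3n-9\nxt 0$ with $\gamma(i,i+1)=i$ for every $i$; this is exactly the standing hypothesis of Lemmas \ref{Lm:n-1}, \ref{Lm:n-5}, \ref{Lm:7}. Assuming first that $n>10$, those lemmas let me assume, after relabeling, that $\gamma(i,i+n-1)\in\{i,i+1\}$, $\gamma(i,i+n-5)\in\{i+n-5,i+n-4\}$ and $\gamma(i,i+7)\in\{i,i+1\}$ for every $i$, so each of these three families of chords has its color confined to a two-element window: near the tail for the chords of length $n-1$ and $7$, and near the head for the chords of length $n-5$.

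The target is then a single $n$-cycle whose edges are all controlled chords of these lengths together with perimeter edges of length $1$ (some possibly traversed backward). Because $n\in\spec{G}$, no such $n$-cycle can be rainbow, so producing one that is \emph{forced} to be rainbow yields the desired contradiction. Following Figure \ref{Fg:TwoRevolutions}, I would build a cycle that winds twice around the $(3n-8)$-gon, i.e.\ with total forward displacement $2(3n-8)$. Here the length-$(n-1)$ chords are the natural sweeping tool, since three of them advance the position by exactly $5$ (as $3(n-1)=(3n-8)+5$) and six of them complete two revolutions up to a shift of $10$; a controlled number of short chords of length $7$ or $n-5$ and of perimeter edges are then inserted to bring the edge count to exactly $n$ while keeping the displacement at $2(3n-8)$.

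The contradiction comes from the colors. Every chord on this cycle has its color pinned to a two-element window, and I would verify that, as the cycle is assembled, these windows are pairwise disjoint and disjoint from the colors of the perimeter edges actually used. Then no two edges of the cycle can share a color, whatever choices are made inside the windows, so the $n$-cycle is rainbow, contradicting $n\in\spec{G}$. This shows that $G$ has no rainbow $(3n-8)$-cycle. Since Lemma \ref{Lm:7} requires $n>10$, the even values $n\in\{4,6,8,10\}$, for which $3n-8\in\{4,10,16,22\}$, must be treated separately; these follow by direct inspection and from the small base implications already recorded in the introduction.

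I expect the genuine difficulty to lie in the disjointness verification rather than in the construction or the arithmetic of the winding. Across two revolutions the vertex indices wrap modulo $3n-8$, so the windows $\{i,i+1\}$ and $\{i+n-5,i+n-4\}$ attached to different edges can fall within a bounded distance of one another; the run-lengths of the sweeping chords must be chosen precisely so that no two windows overlap and none meets a used perimeter color. It is exactly these near-collisions that determine the constant $n>10$ and dictate the precise shape of the cycle drawn in Figure \ref{Fg:TwoRevolutions}.
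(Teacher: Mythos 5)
Your setup (contradiction, normalization of the rainbow $(3n-8)$-cycle, invoking Lemma \ref{Lm:7} to confine chord colors to windows, and then building a two-revolution $n$-cycle whose windows are pairwise disjoint) is exactly the paper's strategy, and your treatment of the small cases $n\in\{4,6,8,10\}$ matches the paper's. But the proposal has a genuine gap: the entire content of the theorem lies in actually exhibiting the two-revolution cycle and proving the disjointness of the color windows, and this is precisely the step you defer (``I would verify that\dots the windows are pairwise disjoint''). The paper's cycle consists of $2$ edges of length $n-1$, $(n/2)-2$ edges of length $7$, and $n/2$ edges of length $1$, \emph{all forward} (note $2(n-1)+((n/2)-2)\cdot 7+(n/2)\cdot 1=2(3n-8)$); the length-$7$ chords, not the length-$(n-1)$ chords, are the sweeping tool. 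The disjointness is then not a delicate near-collision analysis but is engineered by one arithmetic trick: after the first edge $(0,n-1)$ one inserts the least $d$ with $(3n-8)-(n-1+d)\equiv 2\pmod 7$ unit edges, which forces the second revolution's edge of length $n-1$ to land exactly $2$ past the tail $u$ of a first-revolution edge $(u,u+7)$, so every pair of tails of non-perimeter edges differs by at least $2$ and all windows $\{i,i+1\}$ are disjoint. Without this (or some equivalent) explicit choice, nothing in your outline guarantees the windows can be made disjoint, and the claim that the difficulty ``determines the constant $n>10$'' is not a proof.

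Two further concrete problems. First, the arithmetic you do commit to points the wrong way: with six length-$(n-1)$ chords the displacement is $2(3n-8)+10$, so the remaining $n-6$ edges must contribute $-10$; if you balance this with backward unit edges you are forced into constraints such as $n\ge 16$ and, more importantly, backward perimeter edges carry perimeter colors that must also be checked against every window, which your sketch never addresses (the paper's final cycle deliberately uses only forward edges, and its backward edges appear only inside the lemmas). Second, even granting a working general construction, it cannot hold uniformly for all $n>10$: the paper's inequalities require $n\ge 16$, with $n=14$ working only because $d=5$ there, and $n=12$ handled by an explicitly exhibited cycle ($0\nxt 11\nxt 12\nxt 13\nxt 14\nxt 15\nxt 16\nxt 23\nxt 2\nxt 9\nxt 20\nxt 27\nxt 0$). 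Your case split treats only $n\le 10$ as exceptional, so $n=12$ and $n=14$ are left uncovered by any argument you have actually given.
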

\begin{proof}
There is nothing to show for $n=4$. When $n=6$, we have $10 = 6\circ
6\in\spec{G}$. When $n=8$ (resp.\ $n=10$), Alexeev's computer calculations show
that $16\in\spec{G}$ (resp.\ $22\in\spec{G}$). Therefore, we can take $n\ge
12$. Assume, for a contradiction, that $G$ contains a rainbow $(3n-8)$-cycle
$0\to 1\to\cdots\to 3n-9\to 0$ colored $\gamma(i,i+1)=i$, and restrict all
further considerations to the subgraph of $G$ induced by this cycle. By Lemma
\ref{Lm:7}, we can assume that $\gamma(i,i+7)$, $\gamma(i,i+n-1)\in\{i,i+1\}$
for every $i$.

Note that $2\cdot (n-1) + ((n/2)-2)\cdot 7 + (n/2)\cdot 1 = 6n-16 =
2(3n-8)\equiv 0\pmod{3n-8}$. We will therefore consider an $n$-cycle $A$
containing $2$ edges of length $n-1$, $(n/2)-2$ edges of length $7$, and $n/2$
edges of length $1$, all forward. Such an $n$-cycle makes two clockwise
revolutions modulo $3n-8$.

Assume that $n\ge 16$ (we will deal with $n=12$, $14$ later), and construct $A$
as follows (cf. Figure \ref{Fg:TwoRevolutions}): Start with the edge $(0,n-1)$
followed by the least possible number $d$ of edges of length $1$ so that
$(3n-8) - (n-1+d)\equiv 2\pmod 7$. This is possible, since
\begin{equation}\label{Eq:Ineq1}
    0\le d\le 6\le n/2
\end{equation}
holds whenever $n\ge 12$. Continue the cycle by edges of length $7$ until you
reach $5$. Then add the remaining $(n/2)-d$ edges of length $1$, stopping at
$z$. Note that
\begin{equation}\label{Eq:Ineq2}
    z = 5 + (n/2) - d \le n-1-2
\end{equation}
thanks to $d\ge 0$ and $n\ge 16$. Add an edge of length $n-1$, ending at some
$y = z+n-1$. Note that $y$ is past $n-1+d$ if and only if
\begin{equation}\label{Eq:Ineq3}
    z = 5 + (n/2) - d > d,
\end{equation}
which holds because $2d\le 12 <  5 + 8 \le 5 + n/2$. Hence there is an already
constructed edge $(u,u+7)$ of $A$ such that $u < y\le u+7$. Since we have by
now used all two edges of length $n-1$ and all $n/2$ edges of length $1$, the
cycle $A$ closes itself upon adding the remaining edges of length $7$. But this
shows that $y-u=2$. The conditions $\gamma(i,i+7)$,
$\gamma(i,i+n-1)\in\{i,i+1\}$ then imply that $A$ is rainbow, a contradiction.

When $n=14$, the same construction of $A$ goes through, because $d=5$, and the
needed inequalities \eqref{Eq:Ineq1}--\eqref{Eq:Ineq3} hold. When $n=12$, we
let $A$ be the cycle $0\nxt 11\nxt 12\nxt 13\nxt 14\nxt 15\nxt 16\nxt 23\nxt
2\nxt 9\nxt 20\nxt 27\nxt 0$, for instance.
\end{proof}

%%% DOUBLY EVEN CASE

\section{The doubly even case}

Fix $n=4k$. In Lemmas \ref{Lm:n-1x}, \ref{Lm:n-5x}, \ref{Lm:7x}, let $G$ be a
complete, edge-colored graph with $n\in\spec{G}$, containing a rainbow $3n-10$
cycle $0\nxt 1\nxt \cdots\nxt 3n-11\nxt 0$ such that $\gamma(i,i+1)=i$ for
every $i$.

\begin{lemma}\label{Lm:n-1x}
Let $n=4k\ge 12$. Then we can assume without loss of generality that
\begin{displaymath}
    \gamma(i,i+n-1)\in\{i,i+1,i+2\}
\end{displaymath}
for every $i$.
\end{lemma}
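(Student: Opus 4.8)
The plan is to follow the proof of Lemma~\ref{Lm:n-1} line for line, rescaling every length to the shorter perimeter $3n-10$ and replacing the two-element target by a three-element one. As there, the $n$-cycle $0\nxt 1\nxt\cdots\nxt n-1\nxt 0$ gives $\gamma(0,n-1)\in\{0,\dots,n-2\}$. I then read Figure~\ref{Fg:n-1} with $a=n-1$, $b=2n-6$, $c=3$, so that cycle $A$ carries the colours $\gamma(0,n-1),\ n-1,\dots,2n-7,\ \gamma(3,2n-6),\ 2,1,0$. The value $c=3$ is not a free choice: the second chord $(3,2n-6)$ must close up into an $n$-cycle the short way (forcing $b-c=2n-9$, hence $\gamma(3,2n-6)\in\{0,1,2\}\cup\{2n-6,\dots,3n-11\}$), while $A$ itself being an $n$-cycle forces $b+c=2n-3$; together these give $c=3$, $b=2n-6$. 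Intersecting each chord constraint with the colours already present on $A$ leaves only $\{0,1,2\}$, so non-rainbowness of $A$ yields $\gamma(0,n-1)\in\{0,1,2\}$ or $\gamma(3,2n-6)\in\{0,1,2\}$; the reflection $x\mapsto 3-x$, together with the colour reflection $c\mapsto 2-c$ (which fixes the standard perimeter colouring, since $2-(2-j)=j$), swaps the two chords, so without loss of generality $\gamma(0,n-1)\in\{0,1,2\}$.

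To propagate I rotate $A$ by $n-4$. This is cycle $B$ of the figure, with $d=n-4$, $e=2n-5$; its second chord falls exactly on $(0,n-1)$, and since $\{0,1,2\}$ and $\{n-4,n-3,n-2\}$ are disjoint (any $n>6$ suffices), the known value of $\gamma(0,n-1)$ kills one disjunct and forces $\gamma(n-4,2n-5)\in\{n-4,n-3,n-2\}$. Iterating the $n-4$ rotation pins $\gamma(i,i+n-1)\in\{i,i+1,i+2\}$ for every $i$ in the subgroup of $\mathbb Z/(3n-10)$ generated by $n-4$.

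This is exactly where the doubly even case departs from Lemma~\ref{Lm:n-1}, and the step I expect to be the real obstacle. In the even case the rotation $n-3$ is coprime to $3n-8$, so one orbit is everything. Here, however, $3(n-4)-(3n-10)=-2$ and both numbers are even, whence $\gcd(n-4,3n-10)=2$: the rotation reaches only the \emph{even} residues. Moreover no forward rotation of $A$ can escape this, because both chords of $A$ occupy even positions ($0$ and $2n-6$) and rotation preserves their common parity. Thus the argument above proves the claim for even $i$ only, and the odd residues demand a genuinely new construction.

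For the odd residues I would introduce an auxiliary cycle that deliberately crosses parity. The cleanest device is an $n$-cycle making \emph{no} net revolution, carrying the known chord $(0,n-1)$ traversed forwards and a second chord traversed backwards, so that the two chord-displacements cancel; its two perimeter arcs then have equal length $(n-2)/2$, and the backward chord can be made to sit at an odd position. Running non-rainbowness on such a cycle---after routing the arcs so that the known chord's colours $\{0,1,2\}$ never occur on them, while the arc abutting the odd chord supplies that chord's target colours---forces the odd chord into a short interval. Pinning it to exactly three values will in general require intersecting the information from a couple of such cycles, in the style of Lemmas~\ref{Lm:n-5} and~\ref{Lm:7}, since a single zero-revolution cycle tends to leave a five-element interval rather than the desired three. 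This book-keeping is the delicate part, and the demand for arcs of length $(n-2)/2$ that nonetheless keep the forbidden colours off themselves is precisely what forces $n\ge 12$ rather than merely $n=4k$. Once one odd chord is pinned, a final rotation by $n-4$ sweeps out all odd $i$, completing the proof.
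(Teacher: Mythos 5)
Your treatment of the even residues is exactly the paper's argument: the cycle $A$ of Figure \ref{Fg:n-1} with $a=n-1$, $b=2n-6$, $c=3$ gives the dichotomy, a reflection gives the WLOG, the rotated cycle $B$ with $d=n-4$, $e=2n-5$ starts the propagation, and $\gcd(n-4,3n-10)=2$ confines it to even $i$; you also correctly observe that no rotation of $A$ can cross parity, so the odd residues are the crux. But precisely there your proposal stops being a proof: the whole odd case is delegated to a sketched family of ``zero-revolution'' cycles plus bookkeeping that you yourself flag as undone. This is a genuine gap, and moreover the plan as stated cannot be completed. For a fixed odd chord $(x,x+n-1)$, a zero-net-revolution $n$-cycle through it and one oppositely-traversed chord of length $n-1$ has both perimeter arcs of length $(n-2)/2$, so the companion chord must start at $x\pm(n-2)/2$. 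Of these two cycles, the one with companion $e'=x-(n-2)/2$ puts the window $\{e',e'+1,e'+2\}$ of the (known) even chord onto a perimeter arc, so it is automatically non-rainbow and yields no constraint; the single remaining useful cycle (companion $e=x+(n-2)/2$) only yields $\gamma(x,x+n-1)\in\{x,\dots,x+(n+2)/2\}$, about $n/2$ values rather than three. There is no ``couple of such cycles'' left to intersect, so pinning the odd chords directly this way fails.

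The paper never pins the odd chords directly. It first notes that if $\gamma(i,i+n-1)\in\{i,i+1,i+2\}$ holds for \emph{one} odd $i$, rotation gives it for all odd $i$; so assume it fails for all odd $i$. Rotating $A$ clockwise by $1$ then forces the other horn of the dichotomy, $\gamma(2n-5,4)\in\{1,2,3\}$, i.e.\ the chord based at the odd vertex $2n-5$ lies in the shifted window $\{i+n-4,i+n-3,i+n-2\}$, and counterclockwise rotation by $n-4$ spreads this to every odd $i$. Thus only two global scenarios exist for the odd chords, and the bad one is killed by the single cycle $E$ of Figure \ref{Fg:E}, which mixes two even-based chords with one odd-based chord (the vertex $(3/2)n-3$ is odd exactly because $4\mid n$) and would be rainbow once $n\ge 12$, contradicting $n\in\spec{G}$. (This is also where $n\ge 12$ is really used, not in your arc-routing heuristic.) Ironically, your zero-revolution cycle could serve in place of $E$: its conclusion $\gamma(x,x+n-1)\in\{x,\dots,x+(n+2)/2\}$ already excludes the shifted window since $(n+2)/2<n-4$. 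But that only finishes the proof \emph{after} the dichotomy reduction to two scenarios, and that reduction is the idea missing from your proposal.
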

\begin{proof}
Consider the $n$-cycle $A$ of Figure \ref{Fg:n-1} with $a=n-1$, $b=2n-6$,
$c=3$, and deduce that $\gamma(0,n-1)\in\{0,1,2\}$ or
$\gamma(3,2n-6)\in\{0,1,2\}$. Without loss of generality,
$\gamma(0,n-1)\in\{0,1,2\}$.

The rotated cycle $B$ of Figure \ref{Fg:n-1} with $d=n-4$ and $e=2n-5$ then
shows that either $\gamma(0,n-1)\in\{n-4,n-3,n-2\}$ or
$\gamma(n-4,2n-5)\in\{n-4,n-3,n-2\}$, and hence the latter must be true.

Since $\gcd\{3n-10,n-4\} = 2$, we conclude that
$\gamma(i,i+n-1)\in\{i,i+1,i+2\}$ for every even $i$. If
$\gamma(i,i+n-1)\in\{i,i+1,i+2\}$ holds for at least one odd $i$, it then holds
for every odd $i$, and we are done.

Let us therefore assume, for a contradiction, that
$\gamma(1,1+n-1)\not\in\{1,2,3\}$. Using the $n$-cycle $A$ rotated clockwise by
$1$, we see that $\gamma(2n-5,4)\in\{1,2,3\}$. Proceeding as above but with
counterclockwise rotations by $n-4$, we conclude that
$\gamma(i,i+n-1)\in\{i+n-4,i+n-3,i+n-2\}$ for every odd $i$.

%FIGURE
\setlength{\unitlength}{1mm}
\begin{figure}[th]
\begin{center}
\input{one.lp}
\end{center}
\caption{Proof of Lemma \ref{Lm:n-1x}.} \label{Fg:E}
\end{figure}

Note that $n/2$ is even, and consider the $n$-cycle $E$ of Figure \ref{Fg:E}.
(When $n=12$, the vertices $(5/2)n-4$ and $0$ coincide.) Since $0$ is even, $n$
is even, and $(3/2)n-3$ is odd, the edges of $E$ are colored by
$\gamma(0,n-1)\in\{0,1,2\}$, $n-1$, $\gamma(n,2n-1)\in\{n,n+1,n+2\}$,
$(3/2)n-3$, $\dots$, $2n-2$, $\gamma((3/2)n-3, (5/2)n-4)\in\{(5/2)n-7$,
$(5/2)n-6$, $(5/2)n-5\}$, $(5/2)n-4$, $\dots$, $3n-11$. The condition $n\ge 12$
then guarantees that $E$ is rainbow, a contradiction.
\end{proof}

\begin{lemma}\label{Lm:n-5x}
Let $n=4k\ge 12$. Then we can assume without loss of generality that
\begin{displaymath}
    \gamma(i,i+n-1)\in\{i,i+1,i+2\},\quad \gamma(i,i+n-7)\in\{i+n-7,i+n-6,i+n-5\}
\end{displaymath}
for every $i$.
\end{lemma}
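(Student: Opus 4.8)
The plan is to follow the three-cycle template of Lemma~\ref{Lm:n-5}, re-using the shapes $A$, $B$, $C$ of Figure~\ref{Fg:n-5} but relabelled for the modulus $3n-10$ and the target edge $(0,n-7)$. First I would invoke Lemma~\ref{Lm:n-1x} to assume $\gamma(i,i+n-1)\in\{i,i+1,i+2\}$ for every $i$; this is the step that forces $n=4k\ge 12$. The unifying observation is that, for each $s$ with $n-7\le s\le 2n-9$, there is an $n$-cycle making one clockwise revolution that uses the target edge $(0,n-7)$, then $s-(n-7)$ forward edges of length $1$, then a single forward edge of length $n-1$ leaving the vertex $s$, and finally the remaining forward edges of length $1$ back to $0$. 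Indeed the edge-lengths sum to $3n-10$ and the edge count is $n$. On such a cycle the unit edges receive the distinct colours $n-7,\dots,s-1$ and $s+n-1,\dots,3n-11$, while the length-$(n-1)$ edge receives a colour in $\{s,s+1,s+2\}$ by Lemma~\ref{Lm:n-1x}, and all of these are distinct. As the cycle is not rainbow, $\gamma(0,n-7)$ must coincide with one of them, so
\begin{displaymath}
    \gamma(0,n-7)\in I_s:=\{n-7,\dots,s+2\}\cup\{s+n-1,\dots,3n-11\}.
\end{displaymath}

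Next I would intersect $I_s$ over three well-chosen values of $s$. Taking $s=n-7$ (shape $A$, with $a=n-7$, $b=2n-8$) gives $I_{n-7}=\{n-7,n-6,n-5\}\cup\{2n-8,\dots,3n-11\}$. Taking $s=2n-9$ (shape $B$, with $c=2n-9$, so that the length-$(n-1)$ edge wraps back to $0$) gives $I_{2n-9}=\{n-7,\dots,2n-7\}$. Taking $s=n-5$ (shape $C$, with $d=n-5$, $e=2n-6$) gives $I_{n-5}=\{n-7,\dots,n-3\}\cup\{2n-6,\dots,3n-11\}$. A direct computation then yields $I_{n-7}\cap I_{2n-9}=\{n-7,n-6,n-5,2n-8,2n-7\}$ and hence $I_{n-7}\cap I_{2n-9}\cap I_{n-5}=\{n-7,n-6,n-5\}$, which is exactly the wanted restriction on $\gamma(0,n-7)$.

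The point where this case is genuinely harder than Lemma~\ref{Lm:n-5} is the following. Because the length-$(n-1)$ edges now admit three colours rather than two, each $I_s$ omits a block $\{s+3,\dots,s+n-2\}$ of $n-4$ colours, and the two cycles $A$, $B$ leave \emph{two} stray values $\{2n-8,2n-7\}$ rather than the single value that survived in the even case. Consequently the third cycle must be placed with two forward unit edges ($s=n-5$), not one as in Lemma~\ref{Lm:n-5}, so that its omitted block $\{n-2,\dots,2n-7\}$ absorbs both stray values. I expect pinning down this placement, together with checking that the three colour blocks within each cycle are genuinely disjoint, to be the only real work; it goes through comfortably once $n=4k\ge 12$. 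Finally, since the construction could have started at any vertex $i$ rather than $0$, I would conclude that $\gamma(i,i+n-7)\in\{i+n-7,i+n-6,i+n-5\}$ for every $i$, while $\gamma(i,i+n-1)\in\{i,i+1,i+2\}$ is carried over from Lemma~\ref{Lm:n-1x}.
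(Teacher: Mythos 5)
Your proof is correct and is essentially the paper's own argument: after invoking Lemma~\ref{Lm:n-1x}, you use exactly the three cycles $A$, $B$, $C$ of Figure~\ref{Fg:n-5} with the paper's parameters ($a=n-7$, $b=2n-8$, $c=2n-9$, $d=n-5$, $e=2n-6$), merely repackaged as the unified family $I_s$ with $s\in\{n-7,\,2n-9,\,n-5\}$. Your intersection computation yielding $\gamma(0,n-7)\in\{n-7,n-6,n-5\}$ coincides with the paper's.
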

\begin{proof}
We can assume that $\gamma(i,i+n-1)\in\{i,i+1,i+2\}$ for every $i$, by Lemma
\ref{Lm:n-1x}. Consider the $n$-cycles $A$, $B$, $C$ of Figure \ref{Fg:n-5}
with $a=n-7$, $b=2n-8$, $c=2n-9$, $d=n-5$, and $e=2n-6$. We get
$\gamma(0,n-7)\in\{n-7$, $n-6$, $n-5\}\cup\{2n-8$, $\dots$, $3n-11\}$ from $A$,
$\gamma(0,7)\in\{n-7$, $\dots$, $2n-7\}$ from $B$, and $\gamma(0,n-7)\in\{n-7$,
$\dots$, $n-3\}\cup\{2n-6$, $\dots$, $3n-11\}$ from $C$. Altogether,
$\gamma(0,n-7)\in\{n-7$, $n-6$, $n-5\}$, and thus $\gamma(i,i+n-7)\in\{i+n-7$,
$i+n-6$, $i+n-5\}$.
\end{proof}

\begin{lemma}\label{Lm:7x}
Let $n=4k\ge 20$. Then we can assume without loss of generality that
\begin{align*}
    &\gamma(i,i+n-1)\in\{i,i+1,i+2\},\quad\gamma(i,i+n-7)\in\{i+n-7,i+n-6,i+n-5\},\\
    &\gamma(i,i+13)\in\{i,i+1,i+2\}
\end{align*}
for every $i$.
\end{lemma}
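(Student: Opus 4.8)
The plan is to mirror the proof of Lemma~\ref{Lm:7} step for step, with the modulus $3n-8$ replaced by $3n-10$, the chord length $n-5$ replaced by $n-7$, the short length $7$ replaced by $13$, and the two-element color sets of Lemma~\ref{Lm:n-5} replaced by the three-element color sets of Lemma~\ref{Lm:n-5x}. First I would invoke Lemma~\ref{Lm:n-5x} and keep only its second conclusion, $\gamma(i,i+n-7)\in\{i+n-7,i+n-6,i+n-5\}$, exactly as Lemma~\ref{Lm:7} used only the second conclusion of Lemma~\ref{Lm:n-5}.

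The identity that drives everything is $13+4(n-7)-(n-5)=3n-10$ together with $1+4+(n-5)=n$: an $n$-cycle built from one forward edge of length $13$, four forward edges of length $n-7$, and $n-5$ backward edges of length $1$ makes exactly one clockwise revolution modulo $3n-10$. This is the precise analogue of $7+4(n-5)-(n-5)=3n-8$ from Lemma~\ref{Lm:7}, so the three drawings of Figure~\ref{Fg:7} serve again, with the vertex labels taking new values: $a=13$, so the top edge $(0,a)$ is the unknown edge of color $\gamma(0,13)$; the four long chords become edges of length $n-7$; and the perimeter arcs become the $n-5$ backward unit edges. In cycle $A$ the closing chord then runs from $2n-3$ back to $0$, just as in Lemma~\ref{Lm:7}, so its color lies in $\{3n-10,3n-9,3n-8\}\equiv\{0,1,2\}\pmod{3n-10}$; this is the source of the eventual set $\{0,1,2\}$.

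The engine of each deduction is identical to Lemma~\ref{Lm:7}: one places the four length-$(n-7)$ edges so that, for large $n$, their three-element color sets and the fixed colors of the backward unit edges are pairwise disjoint. Then every edge of the cycle except $(0,13)$ is forced to carry a distinct color, so the non-rainbow hypothesis forces $\gamma(0,13)$ to equal one of those colors, yielding a union-of-intervals restriction. For instance, choosing the chords so that the three color blocks are contiguous, cycle $A$ gives $\gamma(0,13)\in\{0,1,2\}\cup\{16,\dots,n+11\}\cup\{2n-3,\dots,2n+4\}$, the exact analogue of the set $\{0,1\}\cup\{9,\dots,n+5\}\cup\{2n-3,\dots,2n\}$ obtained there. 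I would then compute the analogous restrictions from $B$ and $C$, intersect the three, and verify that the intersection collapses to $\{0,1,2\}$; a rotation argument starting at an arbitrary $i$ instead of $0$ finally gives $\gamma(i,i+13)\in\{i,i+1,i+2\}$ for every $i$.

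The real work, as in Lemma~\ref{Lm:7}, is bookkeeping rather than insight: one must fix the vertex assignments in $A$, $B$, $C$ so that the three interval-unions are correct and meet in exactly $\{0,1,2\}$, neither larger nor smaller, and one must locate the single inequality responsible for the stated bound $n\ge 20$. I expect that bound to arise, just as $n>10$ did in cycle $B$ of Lemma~\ref{Lm:7}, from an ordering condition forcing two consecutive marked vertices of one cycle into their intended cyclic position; the threshold roughly doubles because the short length has grown from $7$ to $13$. That inequality is the step I would flag as the place where $n\ge 20$ is used.
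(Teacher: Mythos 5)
Your proposal follows the paper's proof essentially step for step: the paper likewise invokes Lemma~\ref{Lm:n-5x} (using only its second conclusion), reuses the three cycles of Figure~\ref{Fg:7} built from one forward edge of length $13$, four forward edges of length $n-7$, and $n-5$ backward unit edges, obtains from cycle $A$ exactly the restriction $\{0,1,2\}\cup\{16,\dots,n+11\}\cup\{2n-3,\dots,2n+4\}$ that you computed, and finishes by intersecting with the restrictions from $B$ and $C$ and rotating to arbitrary $i$. The only part you leave unexecuted is bookkeeping the paper does carry out---the explicit labels $g=n+6$, $h=2n-1$, $i=n+12$, $j=2n+5$, $\ell=8$ for $B$ and $m=8$, $p=n+1$, $q=2n-6$, $r=3n-13$, $s=2n+2$, $t=5$ for $C$, whose resulting sets do intersect in exactly $\{0,1,2\}$---and, as you anticipated, the bound $n\ge 20$ enters through cycle $B$ (the paper needs $2n+7<3n-10$ so that the color block $\{2n+5,2n+6,2n+7\}$ stays below the modulus).
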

\begin{proof}
Consider the cycles $A$, $B$, $C$ of Figure \ref{Fg:7} with $a=13$, $b=n+6$,
$c=16$, $d=n+9$, $e=2n+2$, $f=2n-3$, $g=n+6$, $h=2n-1$, $i=n+12$, $j=2n+5$,
$\ell = 8$, $m=8$, $p=n+1$, $q=2n-6$, $r=3n-13$, $s=2n+2$, and $t=5$. Note that
each of the cycles contains $1$ forward edge of length $13$, $4$ forward edges
of length $n-7$, and $n-5$ backward edges of length $1$. The restrictions on
$\gamma = \gamma(0,13)$ obtained from $A$, $B$, and $C$, respectively, are:
$\gamma\in\{0,1,2\} \cup \{16,\dots,n+11\} \cup \{2n-3,\dots, 2n+4\}$,
$\gamma\in \{0,\dots,10\} \cup \{n+6,n+7,n+8\} \cup \{n+12,\dots,2n+1\} \cup
\{2n+5,2n+6,2n+7\}$ (we need $n\ge 20$ to have $2n+7<3n-10$), and
$\gamma\in\{0,\dots,12\} \cup \{n+1,n+2,n+3\} \cup \{2n-6,2n-5,2n-4\} \cup
\{2n+2,\dots,3n-11\}$. It is then easy to see that $\gamma(0,13)\in\{0,1,2\}$,
and thus $\gamma(i,i+13)\in\{i,i+1,i+2\}$ for every $i$.
\end{proof}

We will now imitate the proof of Theorem \ref{Th:Period}. But the situation is
more delicate because we will need cycles with four revolutions, and because
the uncertainty as to the color is larger (3 instead of 2 choices on many
edges). On the other hand, the edges of length $13$ give us a bit more wiggle
room than the edges of length $7$.

%FIGURE
\setlength{\unitlength}{1mm}
\begin{figure}[th]
\begin{center}
\input{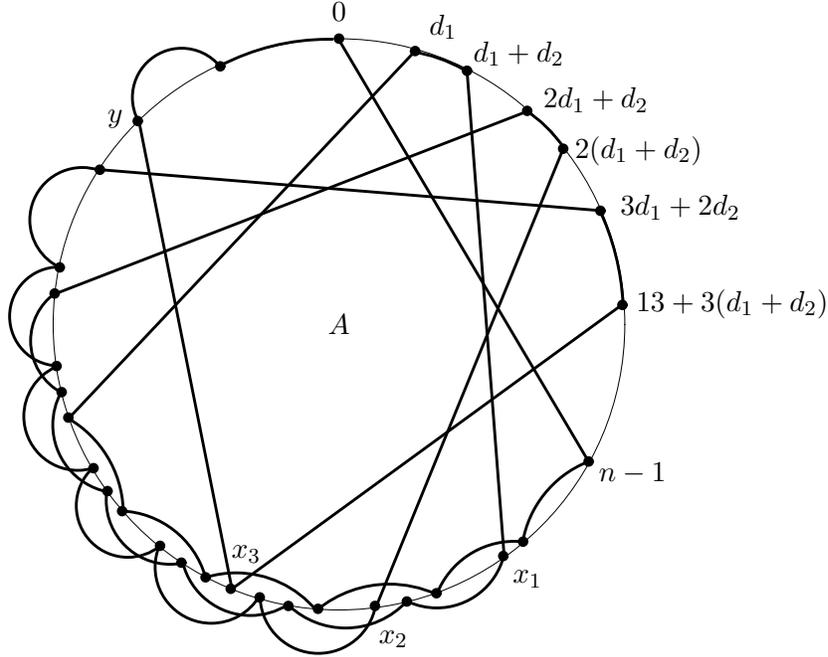}
\end{center}
% correction for a bug in Latexcad (outward arcs increase the size)
\vskip - 2cm \caption{Proof of Theorem \ref{Th:Periodx}.}
\label{Fg:FourRevolutions}
\end{figure}

\begin{theorem}\label{Th:Periodx}
Let $n=4k\ne 16$. Assume that $G$ is a complete, edge-colored graph containing
no rainbow $n$-cycles. Then $G$ contains no rainbow $(3n-10)$-cycles.
\end{theorem}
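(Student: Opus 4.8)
The plan is to imitate the proof of Theorem \ref{Th:Period}, replacing ``two revolutions'' by ``four revolutions'' and ``length $7$'' by ``length $13$.'' First I would dispose of the small cases. For $n=4$ there is nothing to prove, since $3n-10=2\in\spec{G}$ always. For $n=8$ we have $3n-10=14=8\circ 8\in\spec{G}$, because $\spec{G}$ is a monoid under $\circ$. For $n=12$ we have $3n-10=26$, and Alexeev's computation gives $12\in\spec{G}\Rightarrow 26\in\spec{G}$. (The value $n=16$ is excluded from the statement.) Thus it remains to treat $n=4k\ge 20$, which is exactly the range in which Lemma \ref{Lm:7x} is available.

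So assume $n=4k\ge 20$ and, for a contradiction, that $G$ contains a rainbow $(3n-10)$-cycle $0\nxt 1\nxt\cdots\nxt 3n-11\nxt 0$ colored $\gamma(i,i+1)=i$; I restrict attention to the subgraph induced by this cycle. By Lemma \ref{Lm:7x} I may assume $\gamma(i,i+n-1),\gamma(i,i+13)\in\{i,i+1,i+2\}$ for every $i$, while of course $\gamma(i,i+1)=i$. The idea is to build an $n$-cycle $A$, all of whose edges are forward, out of $2$ edges of length $n-1$, $3(k-1)$ edges of length $13$, and $k+1$ edges of length $1$, and to show that these color constraints force $A$ to be rainbow, contradicting $n\in\spec{G}$. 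This edge collection is legitimate: the number of edges is $2+3(k-1)+(k+1)=4k=n$, and the total displacement is $2(n-1)+13\cdot 3(k-1)+(k+1)=12n-40=4(3n-10)\equiv 0$, so $A$ closes up after exactly four clockwise revolutions modulo $3n-10$.

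The content of the argument is that these edges can be arranged so that the colors they are forced to carry are pairwise distinct. Each of the $3k-1$ edges of length $n-1$ or $13$ carries a color in a window $\{v,v+1,v+2\}$ about its starting vertex $v$, while each of the $k+1$ edges of length $1$ carries the single color of its starting vertex. The total size of these windows is $3(3k-1)+(k+1)=10k-2$, which is strictly less than the number $3n-10=12k-10$ of available colors precisely when $k\ge 5$, i.e.\ $n\ge 20$; the slack $2k-8$ is what makes room for a rainbow arrangement, and its vanishing at $n=16$ explains the exclusion of that value. Concretely, I would use long runs of length-$13$ edges, whose windows are automatically disjoint within one revolution since $13$ exceeds the window size $3$, and insert the $k+1$ unit edges and the two edges of length $n-1$ as adjustable spacers — the quantities $d_1,d_2$ of Figure \ref{Fg:FourRevolutions} parametrize this spacing — so as to shift the phase on each new revolution and keep all windows pairwise disjoint.

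The main obstacle, as in Theorem \ref{Th:Period}, is pinning this down: choosing $d_1,d_2$ and the placement of the spacer edges so that the four revolutions interleave without a single window collision, checking that the vertices visited are distinct and that the cycle closes correctly (the analogue of $y-u$ landing inside a length-$13$ window), and verifying the supporting inequalities for every $n=4k\ge 20$. Because the slack $2k-8$ is small for the first few admissible values, I expect the uniform construction to go through only for $n$ past a modest threshold, with the smallest cases (such as $n=20$, and perhaps $n=24$) verified by exhibiting an explicit four-revolution $n$-cycle, just as $n=12$ was handled separately in the even case.
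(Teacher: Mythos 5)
Your frame coincides with the paper's: the small cases are disposed of exactly as in the paper ($2\in\spec{G}$ for $n=4$, $8\circ 8=14$ for $n=8$, Alexeev's computation for $n=12$), Lemma \ref{Lm:7x} is invoked for $n=4k\ge 20$, and the contradiction is to come from a four-revolution forward $n$-cycle whose color windows are pairwise disjoint. But the proposal stops exactly where the real work begins. Your argument for the existence of such a cycle is a comparison of total window size ($10k-2$) with the number of colors ($12k-10$), which is necessary but nowhere near sufficient: the windows must actually be made pairwise disjoint by an explicit arrangement, and this is precisely what occupies essentially the whole of the paper's proof (the choice of $r$, $d_1$, $d_2$, the inequalities \eqref{Eq:x1}--\eqref{Eq:x4}, a uniform construction that only works for $k\ge 35$, an exceptional list $X$ of smaller values of $k$ where the inequalities still hold, a computer search for the remaining $k\le 34$, and hand-built cycles for $k=22$ and $k=25$). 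Saying you would ``insert the unit edges and the two long edges as adjustable spacers so as to shift the phase'' names the problem; it does not solve it, and no explicit cycle is exhibited for any $k$, not even $n=20$.

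Moreover, your edge multiset differs from the paper's and is substantially tighter, which makes the deferred step doubtful rather than routine. You take $2$ edges of length $n-1$, $3(k-1)$ of length $13$, and $k+1$ of length $1$ (blocked positions $10k-2$, slack $2k-8$), whereas the paper takes $8$ edges of length $n-1$, $k-2$ of length $13$, and $3k-6$ of length $1$ (blocked positions $6k+12$, slack $6k-22$): the paper's many long edges sweep most of each revolution while blocking only three colors apiece, and that is what creates room. With your multiset the windows occupy asymptotically a $5/6$ fraction of the circle rather than $1/2$, and at $k=5$ all but two positions of $\mathbb{Z}_{50}$ would have to be blocked exactly once --- an exact-cover-tight condition for which you give no solution and which may well have none. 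There is also a structural obstacle you do not address: a clump of $b$ consecutive unit edges blocks its entire span, so it must thread through the $3$-out-of-$13$ blocked residue patterns of the three other revolutions, forcing $b\le 4$ or so; but then every clump shifts the phase modulo $13$ of the following run of $13$-edges, so keeping all four revolutions' runs pairwise separated by at least $3$ modulo $13$ becomes a global constraint-satisfaction problem, not a local adjustment by two parameters $d_1$, $d_2$ (which, in Figure \ref{Fg:FourRevolutions}, belong to the paper's $8$-long-edge construction, not to yours). As it stands the proposal is a plausible program, not a proof.
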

\begin{proof}
When $n=4$, there is nothing to prove since $2\in\spec{G}$. When $n=8$, $8\circ
8 = 14\in\spec{G}$. When $n=12$, Alexeev's computer calculations show that
$26\in\spec{G}$. We can therefore assume that $n=4k\ge 20$. Suppose that $G$
contains a rainbow $(3n-10)$-cycle $0\to 1\to\cdots\to 3n-11\to 0$ colored
$\gamma(i,i+1)=i$, and restrict all further considerations to the subgraph of
$G$ induced by this cycle. By Lemma \ref{Lm:7x}, we can assume that
$\gamma(i,i+n-1)$, $\gamma(i,i+13)\in\{i,i+1,i+2\}$ for every $i$.

Note that $8\cdot (n-1) + (k-2)\cdot 13 + (3k-6)\cdot 1 = 4(3n-10)\equiv
0\pmod{3n-10}$. We will therefore consider an $n$-cycle $A$ containing $8$
edges of length $n-1$, $k-2$ edges of length $13$, and $3k-6$ edges of length
$1$, all forward. Such an $n$-cycle makes four clockwise revolutions modulo
$3n-10$.

Let $r$ be the least integer such that
\begin{displaymath}
    2(4k-1) + 13r \ge 12k-10+3,
\end{displaymath}
i.e.,
\begin{displaymath}
    r = \left\lceil\frac{4k-5}{13}\right\rceil.
\end{displaymath}

We will now describe the cycle (see Figure \ref{Fg:FourRevolutions}) and later
check under which conditions the construction makes sense.

Let $d_2$ be the least nonegative integer such that $2(n-1)+13r - (3n-10) + d_2
\equiv 3 \pmod{13}$, which is equivalent to $d_2\equiv n+8\pmod{13}$. Let $d_1
= 2(n-1)+13r - (3n-10)$. Repeat three times: Add an edge of length $n-1$, $r$
edges of length $13$, an edge of length $n-1$, and $d_2$ edges of length $1$.
(Note the position of $d_1$ in the Figure.) After the three repetitions, add
$13$ edges of length $1$, $2$ edges of length $n-1$, all remaining edges of
length $13$, and all remaining edges of length $1$.

[Enough edges of length 13?] In the construction, we need at least $3r$ edges
of length $13$. We have precisely $k-2$ such edges available. Thus we must have
\begin{equation}\label{Eq:x1}
    3\left\lceil\frac{4k-5}{13}\right\rceil \le k-2.
\end{equation}
Careful analysis of this inequality shows that it holds for all $k\ge 35$.

[Estimating $d_1$ and $d_2$.] By our choice of $r$, $3\le d_1< 3 + 13 = 16$.
Clearly, $0\le d_2\le 12$.

[Enough edges of length $1$?] In the construction, we need at least $13 + 3d_2$
edges of length $1$. We have $3k-6$ such edges. Thus we must have
\begin{equation}\label{Eq:x2}
    13 + 3d_2 \le 3k-6.
\end{equation}
Since $d_2\le 12$, this inequality holds as long as $k\ge 19$.

[Safely below $n-1$ after three rounds?] After the first round ($1$ edge of
length $n-1$, $r$ edges of length $13$, $1$ edge of length $n-1$, $d_2$ edges
of length $1$), we are in position $d_1+d_2$. The second round starts with an
edge of length $n-1$, which brings us to some $x_1 = n - 1 + d_1 + d_2$. By the
definition of $d_2$, we have $x_1>n-1$ and $x_1 - (n-1)\equiv 3\pmod{13}$.
Moreover, upon completing the second round, we will be in position
$2(d_1+d_2)$, and the next edge of length $n-1$ brings us to some $x_2>x_1$
satisfying $x_2 - (n-1) \equiv 6\pmod{13}$. Finally, after the third round we
are in position $3(d_1+d_2)$. We then add $13$ edges of length $1$. In order to
be safely below $n-1$, we demand
\begin{equation}\label{Eq:x3}
    13 + 3(d_1+d_2) \le n - 1 - 3 = 4k-4.
\end{equation}
Since $d_1\le 15$, $d_2\le 12$, this inequality holds whenever $k\ge 25$.

[Far enough after all long edges have been used?] We now continue by adding two
edges of length $n-1$. The first edge moves us to some $x_3>x_2$ satisfying
$x_3 - (n-1)\equiv 9\pmod{13}$. The second edge brings us to $y = 3(d_1+d_2) +
13 + 2(n-1)$. Are we safely past all the edges of length $13$ used so far? The
farthest edge of length $13$ used so far occurred as the last such edge in
round $3$, and terminated at $(n-1) + 13r + 2(d_1+d_2)$. We therefore demand
\begin{equation}\label{Eq:x4}
    (n-1)+13r + 2(d_1+d_2)+3 \le 3(d_1+d_2) + 13 + 2(n-1).
\end{equation}
Since $d_1\ge 3$ and $r <(4k+8)/13$, this inequality always holds.

[Finishing the cycle.] We have used all edges of length $n-1$, completed more
than three revolutions, and there are no edges between our current position $y$
and $0$. The remaining edges of length $1$ and $13$ can therefore be adjoined
in any order, and we are guaranteed to end exactly at $0$.

When all inequalities \eqref{Eq:x1}--\eqref{Eq:x3} hold, the construction
yields a rainbow $n$-cycle, a contradiction.

We are therefore done when $k\ge 35$. Let us have a closer look at $k$ in the
interval $5\le k\le 34$. It is not hard to check that the inequalities
\eqref{Eq:x1}--\eqref{Eq:x4} hold for every $k\in X = \{11$, $20$, $23$, $24$,
$26$, $27$, $29$, $30$, $32$, $33\}$.

For all remaining values $k\in \{5,\dots,34\}\setminus X$, we ran a greedy
algorithm that attempts to construct a $4k$-cycle with $8$ edges of length
$4k-1$, $k-2$ edges of length $13$, and $3k-6$ edges of length $1$, all
forward, so that Lemma \ref{Lm:7x} guarantees that the cycle is rainbow. The
depth-first backtrack algorithm first tries to extend paths by edges of length
$1$, then by edges of length $13$, and finally by edges of length $n-1$. It
succeeds for all $k\in\{5,\dots,34\}\setminus X$, except for $k=22$, $k=25$
(when it was terminated after a few minutes). For instance, it finds $0\nxt
1\nxt 2\nxt 3\nxt 16\nxt 17\nxt 18\nxt 37\nxt 6\nxt 25\nxt 26\nxt 27\nxt 46\nxt
9\nxt 22\nxt 41\nxt 42\nxt 43\nxt 12\nxt 31\nxt 0$ as a valid cycle for $k=5$.

The remaining two cases $k=22$, $k=25$ can be constructed by hand, by
essentially following the general construction. To describe the two cycles, we
use compact notation, in which $\nxo{m}$ means that an edge of length $m$ was
used once, and $\nxm{t}{m}$ means that $t$ edges of length $m$ were used in
succession.

Here is the cycle for $k=22$: $0\nxo{87} 87 \nxm{13}{1} 100 \nxm{6}{13} 178
\nxo{87} 11 \nxm{5}{1} 16 \nxo{87} 103 \nxm{7}{13} 194 \nxo{87} 27 \nxm{5}{1}
32 \nxo{87} 119 \nxm{7}{13} 210 \nxo{87} 43 \nxm{5}{1} 48 \nxm{2}{87} 222
\nxm{32}{1} 0$. And here is the cycle for $k=25$: $0 \nxo{99} 99 \nxm{13}{1}
112 \nxm{7}{13} 203 \nxo{99} 12 \nxm{4}{1} 16 \nxo{99} 115 \nxm{8}{13} 219
\nxo{99} 28 \nxm{4}{1} 32 \nxo{99} 131 \nxm{8}{13} 235 \nxo{99} 44 \nxm{4}{1}
48 \nxm{2}{99} 246 \nxm{44}{1} 0$.
\end{proof}

\section{Main result}

\begin{theorem}\label{Th:Main}
Let $G$ be an infinite, complete, edge-colored graph with $n\in\spec{G}$.
\begin{enumerate}
\item[(i)] If $n\ge 3$ is odd, there is $N\le 2n^2-13n+23$ such that
    $\{N+k;\;k\ge 0\}\subseteq \spec{G}$.

\item[(ii)] If $n=4k+2\ge 6$, there is $N\le (9/4)n^2-18n+37$ such that
    $\{N+4k;\;k\ge 0\}\subseteq \spec{G}$.

\item[(iii)] If $n=4k$, there is $N\le (9/2)n^2-39n+86$ such that
    $\{N+2k;\;k\ge 0\}\subseteq \spec{G}$.
\end{enumerate}
\end{theorem}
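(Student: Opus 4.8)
The plan is to reduce all three parts to a single device: inside the submonoid $\spec{G}-2$ of $(\mathbb N_0,+)$ I will exhibit two elements that are both divisible by $p=p(n)$ and whose quotients by $p$ are relatively prime, and then quote Corollary \ref{Cr:Frob}. That corollary hands me an $N'\le p(a-1)(b-1)$ with $\{N'+kp\;|\;k\ge 0\}\subseteq\spec{G}-2$; adding $2$ throughout gives $\{(N'+2)+kp\;|\;k\ge 0\}\subseteq\spec{G}$, so I only need to carry the estimate $N=N'+2\le p(a-1)(b-1)+2$. The inputs I may combine freely are the closure of $\spec{G}-2$ under addition (equivalently, $\circ$-powers such as $n\circ n\circ n=3n-4$) together with the two structural facts $3n-8\in\spec{G}$ (Theorem \ref{Th:Period}) and, when $4\mid n$, $3n-10\in\spec{G}$ (Theorem \ref{Th:Periodx}). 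The organizing trick is to arrange the two quotients to be \emph{consecutive} integers, so that coprimality needs no argument.

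Case (i), with $n$ odd, is nothing new: Alexeev's result (iv) already gives $m\in\spec{G}$ for every $m\ge 2n^2-13n+23$, which is precisely $\per{\spec{G}}=1$ with the stated bound. For case (ii) I write $n=4k+2$ and take the elements $3n-4=n\circ n\circ n$ and $3n-8$ of $\spec{G}$. Subtracting $2$, the elements $3n-6=4\cdot 3k$ and $3n-10=4(3k-1)$ of $\spec{G}-2$ are both divisible by $p=4$, with quotients $3k$ and $3k-1$; being consecutive they are coprime, so Corollary \ref{Cr:Frob} yields $\per{\spec{G}}\mid 4$ and $N\le 4(3k-1)(3k-2)+2$. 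For case (iii) I write $n=4k$ and take instead $3n-8$ and $3n-10$; then $3n-10=2(6k-5)$ and $3n-12=2(6k-6)$ lie in $\spec{G}-2$, are divisible by $p=2$, and have the consecutive (hence coprime) quotients $6k-5$ and $6k-6$, whence $\per{\spec{G}}\mid 2$ and $N\le 2(6k-6)(6k-7)+2$.

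It then remains to confirm the numerology and to dispose of the values where the structural theorems do not apply verbatim. Expanding $4(3k-1)(3k-2)+2$ with $k=(n-2)/4$ gives exactly $(9/4)n^2-18n+37$, and $2(6k-6)(6k-7)+2$ with $k=n/4$ gives exactly $(9/2)n^2-39n+86$, matching (ii) and (iii). In case (ii) Theorem \ref{Th:Period} already supplies $3n-8$ for every even $n\ge 6$ inside its own proof (via $6\circ 6$ and Alexeev's table), so no separate small cases arise. In case (iii) the pair $3n-8$, $3n-10$ is available for every $n=4k$ except $n=16$, since the two structural theorems dispatch $n=8,12$ themselves and $n=4$ is trivial because $2\in\spec{G}$. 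The one genuine exception is $n=16$, where Theorem \ref{Th:Periodx} is silent; there I would instead pair $n=16$ with $3n-8=40$, obtaining $14=2\cdot 7$ and $38=2\cdot 19$ in $\spec{G}-2$ with coprime quotients $7,19$, hence $\per{\spec{G}}\mid 2$ and $N\le 2\cdot 6\cdot 18+2=218$, comfortably under the value $614$ that $(9/2)n^2-39n+86$ assumes at $n=16$.

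The substantive difficulty of the whole program lies upstream, in the cycle constructions behind Theorems \ref{Th:Period} and \ref{Th:Periodx} (especially the four-revolution construction, with its inequalities \eqref{Eq:x1}--\eqref{Eq:x4} and the backtracking for $5\le k\le 34$); granting those, the present theorem is essentially bookkeeping. The one place I expect friction is matching the advertised constants, because the bound is sensitive to \emph{which} pair one feeds to Corollary \ref{Cr:Frob}: choosing the consecutive quotients $3k,3k-1$ and $6k-5,6k-6$ (rather than using, say, $n-2$ directly) is what keeps coprimality transparent and pins the leading terms to the stated $(9/4)n^2$ and $(9/2)n^2$. The residual risk is merely an off-by-a-few slip in the small cases and in the $n=16$ exception, which a short direct check settles.
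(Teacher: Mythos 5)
Your proposal is correct and takes essentially the same route as the paper: part (i) is delegated to Alexeev, and parts (ii) and (iii) feed exactly the same pairs $3n-6=4(3k)$, $3n-10=4(3k-1)$ and $3n-10=2(6k-5)$, $3n-12=2(6k-6)$ into Corollary \ref{Cr:Frob}, arriving at the identical bounds $(9/4)n^2-18n+37$ and $(9/2)n^2-39n+86$. The only difference is cosmetic, at the exceptional value $n=16$: the paper cites a computer check on the semigroup generated by $14$, $38$, $108$, whereas you apply Corollary \ref{Cr:Frob} directly to $14=2\cdot 7$ and $38=2\cdot 19$, which suffices and yields the same bound $218\le 614$.
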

\begin{proof}
Part (i) is proved in \cite{A}.

Assume that $n=4k+2\ge 6$. Then $\spec{G}$ contains $n\circ n\circ n=3n-4$, and
also $3n-8$, by Theorem \ref{Th:Period}. Hence $A=\spec{G}-2$ contains $3n-6 =
4(3k)$ and $3n-10 = 4(3k-1)$. Since $3k$, $3k-1$ are relatively prime,
Corollary \ref{Cr:Frob} implies that $A$ contains $\{N+4k;\;k\ge 0\}$ for some
$N\le 4(3k-2)(3k-1)$. Hence $\spec{G}$ contains $\{N+4k;\;k\ge 0\}$ for some
$N\le 4(3k-2)(3k-1)+2 = (9/4)n^2 - 18n + 37$. This proves (ii).

Assume that $n=4k$. If $n=4$ then $\spec{G}$ contains all positive even
integers, and $9/2\cdot n^2 - 39n + 86 = 2$, so (ii) holds. Assume that
$4<n=4k\ne 16$. Then $\spec{G}$ contains $3n-8$ by Theorem \ref{Th:Period}, and
also $3n-10$ by Theorem \ref{Th:Periodx}. Thus $A=\spec{G}-2$ contains $3n-10 =
2(6k-5)$ and $3n-12 = 2(6k-6)$. Since $6k-5$, $6k-6$ are relatively prime (we
need $n>4$ here, else $6k-6=0$), Corollary \ref{Cr:Frob} implies that $A$
contains $\{N+2k;\;k\ge 0\}$ for some $N\le 2(6k-7)(6k-6)$. Hence $\spec{G}$
contains $\{N+2k;\;k\ge 0\}$ for some $N\le 2(6k-7)(6k-6) + 2 = (9/2)n^2 -
39n+86$.

Finally, assume that $n=16$. We have $16$, $3\cdot 16-8 = 40\in\spec{G}$, by
Theorem \ref{Th:Period}, and also $3\cdot 40-10 = 110\in\spec{G}$, by Theorem
\ref{Th:Periodx}. Then $A=\spec{G}-2$ contains $14$, $38$ and $108$, and a
check by computer shows that it contains $\{216+2k;\;k\ge 0\}$. Hence
$\spec{G}$ contains $\{218+2k;\;k\ge 0\}$. Since $218\le 9/2 \cdot 16^2 -
39\cdot 16 + 86 = 614$, we are through.
\end{proof}

\section{Comments and suggestions for future research}

\subsection{Comments}

1) The notion of $\spec{G}$ makes sense for finite graphs, too, but then we
have $\per{\spec{G}}=1$, since there are no $n$-cycles for sufficiently large
$n$.

2) The results on $\per{\spec{G}}$ for $n=4k$, $n=4k+2$ cannot be improved in
general, as witnessed by any graph $G$ with $\spec{G}=\{2$, $4$, $6$, $\dots\}$
(see \cite{BPV} for an example), and by any graph $H$ with $\spec{H} = \{2$,
$6$, $10$, $\dots\}$ (see \cite{A} for an example).

3) Even if the bound in Lemma \ref{Lm:7x} could be improved to $n\ge 16$,
Theorem \ref{Th:Periodx} would not go through for $n=16$ with the current
proof, since there is no ``valid''  $16$-cycle with $8$ edges of length $15$,
$2$ edges of length $13$, and $6$ edges of length $1$, all forward (by a
computer search).

4) The greedy algorithm used in the proof of Theorem \ref{Th:Periodx} is
available at
\begin{displaymath}
    \text{http://www.math.du.edu/\~{}petr}.
\end{displaymath}

\subsection{Suggestions for future research}

Let $n\in\spec{G}$.

1) Improve the bound on $\peron{\spec{G}}$. Can you do better than
$\peron{\spec{G}} = O(n^2)$? We are not aware of any examples in which
$\peron{\spec{G}}$ would get close to $n^2$.

2) For a more ambitious project, describe $\spec{G}$ between $n$ and
$\peron{\spec{G}}$.

3) Extend Theorem \ref{Th:Periodx} to cover the case $n=16$, if possible.

4) Find an iterative construction for all finite, complete, edge-colored graphs
with $4\in\spec{G}$.

\section{Acknowledgement}

I would like to thank Lars-Daniel \"{O}hman for useful conversations at the
2006 Horizon of Combinatorics conference. I would also like to thank the two
anonymous referees for their exceptionally detailed and insightful
reports---the paper has benefited greatly from their comments.

% REFERENCES


\begin{thebibliography}{9}

\bibitem{A} B. Alexeev, \emph{On lengths of rainbow cycles}, Electron. J.
Combin. \textbf{13} (2006), no. \textbf{1}, Research Paper 105, 14 pp.

\bibitem{BPV} R.~N.~Ball, A.~Pultr and P.~Vojt\v{e}chovsk\'y, \emph{Colored
    graphs without colorful cycles}, Combinatorica \textbf{27} (2007), no.
    \textbf{4}, 407--427.

\bibitem{Gallai} T. Gallai, \emph{Transitiv oreintierbare Graphen}, Acta
Math. Acad. Sci. Hungar. \textbf{18} (1967), 25--66. English translation by F.
Maffray and M. Preissmann, in Perfect Graphs, edited by J. L. Ramirez-Alfonsin
and B. A. Reed, John Wiley and Sons.
\end{thebibliography}
\end{document}